\newtheorem{theorem}{Theorem}
\newtheorem{corollary}[theorem]{Corollary}
\newtheorem{proposition}[theorem]{Proposition}
\newtheorem{lemma}[theorem]{Lemma}
\newtheorem{definition}[theorem]{Definition}
\newcommand{\cpkk}{{\rm cpk\,}}
\newcommand{\cdasc}{{\rm cdasc\,}}
\newcommand{\cddes}{{\rm cddes\,}}
\newcommand{\crun}{{\rm crun\,}}
\newcommand{\altrun}{{\rm altrun\,}}
\newcommand{\ass}{{\rm as\,}}
\newcommand{\udrun}{{\rm udrun\,}}
\newcommand{\fap}{{\rm fap\,}}
\newcommand{\ap}{{\rm ap\,}}
\newcommand{\lap}{{\rm la\,}}
\newcommand{\des}{{\rm des\,}}
\newcommand{\cyc}{{\rm cyc\,}}
\newcommand{\fix}{{\rm fix\,}}
\newcommand{\cc}{{\mathcal C}}
\newcommand{\mdn}{\mathcal{D}}
\newcommand{\msn}{\mathfrak{S}_n}
\newcommand{\ms}{\mathfrak{S}}
\newcommand{\lrf}[1]{\lfloor #1\rfloor}
\newcommand{\mqn}{\mathcal{Q}_n}
\title{The alternating run polynomials of permutations}
\author[S.-M.~Ma]{Shi-Mei Ma}
\address{School of Mathematics and Statistics,
        Northeastern University at Qinhuangdao,
         Hebei 066000, P.R. China}
\email{shimeimapapers@163.com (S.-M. Ma)}
\author[J.~Ma]{Jun Ma}
\address{Department of mathematics, Shanghai Jiao Tong University, Shanghai, P.R. China}
\email{majun904@sjtu.edu.cn(J.~Ma)}
\author[Y.-N. Yeh]{Yeong-Nan Yeh}
\address{Institute of Mathematics,
        Academia Sinica, Taipei, Taiwan}
\email{mayeh@math.sinica.edu.tw (Y.-N. Yeh)}
\subjclass[2010]{Primary 05A05; Secondary 05A15}
\begin{document}

\maketitle
\begin{abstract}
In this paper, we first consider a generalization of the David-Barton identity
which relate the alternating run polynomials to Eulerian polynomials.
By using context-free grammars, we then present a combinatorial interpretation of a family of $q$-alternating run polynomials.
Furthermore, we introduce the definition of semi-$\gamma$-positive polynomial and
we show the semi-$\gamma$-positivity of the alternating run polynomials of dual Stirling permutations.~A
connection between the up-down run polynomials of permutations and the alternating run polynomials of dual Stirling permutations is established.
\bigskip

\noindent{\sl Keywords}: Alternating runs; Eulerian polynomials; Semi-$\gamma$-positivity; Stirling permutations
\end{abstract}
\date{\today}
\section{Introduction}
\hspace*{\parindent}
The enumeration of permutations by number of alternating runs was first studied by Andr\'e~\cite{Andre84}.
Knuth~\cite[Section~5.1.3]{Knuth73} has discussed this topic in connection to sorting
and searching. Over the past few decades, the study of alternating runs of permutations was initiated by David and Barton~\cite[157-162]{David62}.

Let $\msn$ denote the symmetric group of all permutations of $[n]=\{1,2,\ldots,n\}$.
Let $\pi=\pi(1)\pi(2)\cdots\pi(n)\in\msn$.
An {\it alternating run} of $\pi$ is a maximal consecutive subsequence that is increasing or decreasing (see~\cite{Andre84,Ma132}).
An {\it up-down run} of $\pi$ is an alternating run of $\pi$ endowed with a 0 in the front (see~\cite{Taylor03,Ma132}).
Let $\altrun(\pi)$ (resp.~$\udrun(\pi)$) be the number of alternating runs (resp.~up-down runs) of $\pi$.
For example, if $\pi=324156$, then  $\altrun(\pi)=4,~\udrun(\pi)=5$.
We define
\begin{align*}
R_{n,k}&=\#\{\pi\in\msn: \altrun(\pi)=k\},\\
T_{n,k}&=\#\{\pi\in\msn: \udrun(\pi)=k\}.
\end{align*}
It is well known that these numbers satisfy the following recurrence relations
\begin{align*}
R_{n+1,k}&=kR_{n,k}+2R_{n,k-1}+(n-k+1)R_{n,k-2},
\end{align*}
\begin{equation}\label{Tnk-recu}
T_{n+1,k}=kT_{n,k}+T_{n,k-1}+(n-k+2)T_{n,k-2},
\end{equation}
with the initial conditions $R_{1,0}=1$ and $R_{1,k}=0$ for $k\geq 1$, $T_{0,0}=1$ and $T_{0,k}=0$ for $k\geq 1$ (see~\cite{Andre84,Taylor03}).
The {\it alternating run polynomial} and {\it up-down run polynomial} are respectively defined by
$R_n(x)=\sum_{k=0}^{n-1}R_{n,k}x^k$ and
$T_n(x)=\sum_{k=0}^nT_{n,k}x^k$.

A {\it descent} of $\pi\in\msn$ is an index $i\in [n-1]$ such that $\pi(i)>\pi(i+1)$. Denote by $\des(\pi)$ the number of descents of $\pi$.
The classical {\it Eulerian polynomial} is defined by
$A_n(x)=\sum_{\pi\in\msn}x^{\des(\pi)+1}$.
By solving a differential equation, David and Barton~\cite[157-162]{David62} established the identity:
\begin{equation}\label{RnxAnx}
R_n(x)=\left(\frac{1+x}{2}\right)^{n-1}(1+w)^{n+1}A_n\left(\frac{1-w}{1+w}\right)
\end{equation}
for $n\geq 2$, where $w=\sqrt{\frac{1-x}{1+x}}$. Using~\eqref{RnxAnx}, B\'ona proved that the polynomial $R_n(x)$ has only real zeros (see~\cite{Bona12}).
Moreover, one can prove that $R_n(x)$ has the zero $x=-1$ with the multiplicity $\lrf{\frac{n}{2}}-1$ by using~\eqref{RnxAnx},
which can also be obtained based on the recurrence relation of $R_n(x)$ (see~\cite{Ma08}).
Motivated by~\eqref{RnxAnx}, Zhuang~\cite{Zhuang17} proved several identities expressing polynomials counting permutations by various descent statistics
in terms of Eulerian polynomials.

Let us now recall another combinatorial interpretation of $T_n(x)$.
An {\it alternating subsequence} of $\pi$ is a subsequence $\pi(i_1)\cdots\pi(i_k)$ satisfying
$$\pi(i_1)>\pi(i_2)<\pi(i_3)>\cdots \pi(i_k),$$
where $i_1<i_2<\cdots<i_k$ (see~\cite{Stanley08}).
Denote by $\ass(\pi)$ the number of terms of the longest alternating subsequence of $\pi$.
By definition, we see that $\ass(\pi)=\udrun(\pi)$. Thus
$$T_n(x)=\sum_{\pi\in\msn}x^{\ass(\pi)}.$$

There has been much recent work related to the numbers $R_{n,k}$ and $T_{n,k}$.
In~\cite{Bona00}, B\'ona and Ehrenborg proved that $R_{n,k}^2\geq R_{n,k-1}R_{n,k+1}$.
Subsequently, B\'ona~\cite[Section 1.3.2]{Bona12} noted that
\begin{equation}\label{TnxRnx}
T_n(x)=\frac{1}{2}(1+x)R_n(x)
\end{equation}
for $n\geq 2$. Set $\rho=\sqrt{1-x^2}$. Stanley~\cite[Theorem 2.3]{Stanley08} showed that
\begin{equation}\label{EGF-Tnx}
T(x,z)=:\sum_{n=0}^\infty T_n(x)\frac{z^n}{n!}=(1-x)\frac{1+\rho+2xe^{\rho z}+(1-\rho)e^{2\rho z}}{1+\rho-x^2+(1-\rho-x^2)e^{2\rho z}}.
\end{equation}
By using~\eqref{TnxRnx} and~\eqref{EGF-Tnx}, Stanley~\cite{Stanley08} obtained explicit formulas of $T_{n,k}$ and $R_{n,k}$.
Canfield and Wilf~\cite{Canfield08} presented an asymptotic formula for $R_{n,k}$.
In~\cite{Ma12}, another explicit formula of $R_{n,k}$ was obtained by combining the derivative polynomials of tangent function and
the following generating function obtained by Carlitz~\cite{Carlitz78}:
\begin{equation*}\label{Carlitz}
\sum_{n=0}^\infty\frac{z^n}{n!}\sum_{k=0}^nR_{n+1,k}x^{n-k}=\frac{1-x}{1+x}\left(\frac{\sqrt{1-x^2}+\sin(z\sqrt{1-x^2})}{x-\cos(z\sqrt{1-x^2})}\right)^2.
\end{equation*}
In~\cite{Ma132}, several convolution formulas of the polynomials $R_n(x)$ and $T_n(x)$ are obtained by using Chen's grammars.
By generalizing a reciprocity formula of Gessel, Zhuang~\cite{Zhuang16}
obtained generating function for permutation statistics that are expressible in terms of alternating runs. Very recently, Josuat-Verg\`es and Pang~\cite{Josuat18} showed that alternating runs can be
used to define subalgebras of Solomon's descent algebra.

In this paper,
we continue the work initiated by David and Barton~\cite{David62}.
In Section~\ref{Section:02},
we consider a generalization of~\eqref{RnxAnx}.
In Section~\ref{Section:03}, we present a combinatorial interpretation of a family of $q$-alternating run polynomials by using Chen's grammars. In Section~\ref{semi-gamma}, we show
the semi-$\gamma$-positivity of the alternating run polynomials of dual Stirling permutations.
\section{The David-Barton type identity}\label{Section:02}
Let $f(x)=\sum_{i=0}^nf_ix^i$ be a symmetric polynomial, i.e., $f_i=f_{n-i}$ for any $0\leq i\leq n$. Then $f(x)$ can be expanded uniquely as
$$f(x)=\sum_{k=0}^{\lrf{\frac{n}{2}}}\gamma_kx^k(1+x)^{n-2k},$$ and it is said to be {\it $\gamma$-positive} if $\gamma_k\geq 0$ for $0\leq k\leq \lrf{\frac{n}{2}}$ (see~\cite{Gal05}).
The $\gamma$-positivity provides an approach to study symmetric and unimodal polynomials and has been extensively studied
(see~\cite{Athanasiadis17,Branden08,Chow08,Lin15} for instance).

The first main result of our paper is the following, which shows that the David-Barton type identities often occur in combinatorics and geometry.
\begin{theorem}\label{MnxThm}
Let $$M_n(x)=\sum_{k=0}^{\lrf{(n+\delta)/2}}M(n,k)x^k(1+x)^{n+\delta-2k}$$ be a symmetric polynomial, where $\delta$ is a fixed integer.
Set $w=\sqrt{\frac{1-x}{1+x}}$. Then
\begin{equation}\label{NnxMnx01}
N_n(x)=\left(\frac{1+x}{2}\right)^{n-\delta}(1+w)^{n+\delta}M_n\left(\frac{1-w}{1+w}\right)
\end{equation}
\end{theorem}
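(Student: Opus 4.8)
The content of Theorem~\ref{MnxThm} is the closed-form evaluation of the right-hand side of~\eqref{NnxMnx01}; concretely, the plan is to prove that
\begin{equation*}
N_n(x)=\sum_{k=0}^{\lrf{(n+\delta)/2}}M(n,k)\,2^{2\delta-k}\,x^k(1+x)^{n-\delta-k}.
\end{equation*}
Since both $M_n$ and the substitution $x\mapsto\frac{1-w}{1+w}$ enter~\eqref{NnxMnx01} linearly in the coefficients $M(n,k)$, I would first reduce to a single symmetric-basis term: it suffices to show, for each fixed $k$, that when $M_n(x)=x^k(1+x)^{n+\delta-2k}$ one has $N_n(x)=2^{2\delta-k}x^k(1+x)^{n-\delta-k}$. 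The general statement then follows by summing against $M(n,k)$.

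For the single term, set $y=\frac{1-w}{1+w}$. The first step is the elementary identity $1+y=\frac{2}{1+w}$, which, together with $y=\frac{1-w}{1+w}$, gives
\begin{equation*}
y^k(1+y)^{n+\delta-2k}=\frac{(1-w)^k\,2^{n+\delta-2k}}{(1+w)^{n+\delta-k}}.
\end{equation*}
Multiplying by the prefactor $(1+w)^{n+\delta}$, the powers of $(1+w)$ collapse to $(1+w)^k$ in the numerator, so that
\begin{equation*}
(1+w)^{n+\delta}\,y^k(1+y)^{n+\delta-2k}=2^{n+\delta-2k}\bigl[(1-w)(1+w)\bigr]^k=2^{n+\delta-2k}(1-w^2)^k.
\end{equation*}
This is the crucial cancellation: only the even power $w^2$ survives, and by definition $w^2=\frac{1-x}{1+x}$, whence $1-w^2=\frac{2x}{1+x}$. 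Substituting and then multiplying by $\left(\frac{1+x}{2}\right)^{n-\delta}$ turns the right-hand side of~\eqref{NnxMnx01} into $2^{2\delta-k}x^k(1+x)^{n-\delta-k}$ after collecting the powers of $2$ and of $(1+x)$; this is precisely the claimed single-term evaluation.

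The computation is routine bookkeeping once the reduction is made, so there is no serious conceptual obstacle; the one point that genuinely uses the hypothesis is the disappearance of $w$. A priori $N_n(x)$ is only a function of the algebraic quantity $w=\sqrt{(1-x)/(1+x)}$, and it is the precise matching of exponents in the symmetric basis---the power $n+\delta-2k$ on $(1+x)$ paired against the factor $(1+w)^{n+\delta}$---that forces the odd powers of $w$ to cancel in the product $(1-w)(1+w)=1-w^2$, leaving a genuine polynomial in $x$. I would therefore lay out the exponent count carefully at that step. As a consistency check, the David--Barton identity~\eqref{RnxAnx} is recovered as the case $\delta=1$ with $M_n=A_n$, where the $M(n,k)$ are the coefficients of $A_n$ in the basis $\{x^k(1+x)^{n+1-2k}\}$: for $n=2$ one has $M(2,0)=0$, $M(2,1)=1$, and the formula returns $N_2(x)=2x=R_2(x)$.
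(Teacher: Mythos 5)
Your proposal is correct and follows essentially the same route as the paper: a direct substitution using the identities $1+\frac{1-w}{1+w}=\frac{2}{1+w}$ and $1-w^2=\frac{2x}{1+x}$, with the powers of $(1+w)$ cancelling exactly because of the exponent $n+\delta-2k$ in the symmetric basis, yielding $N_n(x)=\sum_k 2^{2\delta-k}M(n,k)x^k(1+x)^{n-\delta-k}$, which is the paper's equation~\eqref{NnxMnx02} since $2^{2\delta-k}=\frac{1}{2^{k-2\delta}}$. The only cosmetic difference is that you reduce to a single basis term $x^k(1+x)^{n+\delta-2k}$ by linearity, whereas the paper carries the full sum through the same computation.
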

if and only if
\begin{equation}\label{NnxMnx02}
N_n(x)=\sum_{k=0}^{\lrf{(n+\delta)/2}}\frac{1}{2^{k-2\delta}}M(n,k) x^k(1+x)^{n-\delta-k}.
\end{equation}
\begin{proof}
Set $\alpha=\frac{1+x}{2}$.
Note that
\begin{align*}
1-w^2&=\frac{x}{\alpha},\\
\frac{1-w}{1+w}&=\frac{1-w^2}{(1+w)^2}=\frac{1}{(1+w)^2}\frac{x}{\alpha},\\
1+\frac{1-w}{1+w}&=\frac{2}{1+w}.
\end{align*}
It follows from~\eqref{NnxMnx01} that
\begin{align*}
N_n(x)&=\left(\frac{1+x}{2}\right)^{n-\delta}(1+w)^{n+\delta}M_n\left(\frac{1-w}{1+w}\right)\\
&=\alpha^{n-\delta}(1+w)^{n+\delta}\sum_k M(n,k)\frac{1}{(1+w)^{2k}}\frac{x^k}{\alpha^k}\left(\frac{2}{1+w}\right)^{n+\delta-2k}\\
&=\sum_k M(n,k) x^k\alpha^{n-\delta-k}2^{n+\delta-2k}\\
&=\sum_k M(n,k) x^k\left(\frac{1+x}{2}\right)^{n-\delta-k}2^{n+\delta-2k}\\
&=\sum_k \frac{1}{2^{k-2\delta}}M(n,k) x^k(1+x)^{n-\delta-k},
\end{align*}
and vice versa. This completes the proof.
\end{proof}

The reader is referred to~\cite{Athanasiadis17} for a survey of some recent results on $\gamma$-positivity.
For any $\gamma$-positive polynomial $M_n(x)$,
we can define an associated polynomial $N_n(x)$ by using~\eqref{NnxMnx02}. And then we get a David-Barton type identity~\eqref{NnxMnx01}.
As illustrations,
in the rest of this section, we shall present two examples.

For example,
Foata and Sch\"utzenberger~\cite{Foata70} discovered that
\begin{equation*}\label{Anx-gamma}
A_n(x)=\sum_{k=1}^{\lrf{({n+1})/{2}}}a(n,k)x^k(1+x)^{n+1-2k}
\end{equation*}
for $n\geq 1$, where the numbers $a(n,k)$ satisfy the recurrence relation
\begin{equation*}\label{ank-recu}
a(n,k)=ka(n-1,k)+(2n-4k+4)a(n-1,k-1),
\end{equation*}
with the initial conditions $a(1,1)=1$ and $a(1,k)=0$ for $k\neq1$ (see~\cite{Chow08,Petersen15} for instance).
By using the David-Barton identity~\eqref{RnxAnx} and Theorem~\ref{MnxThm}, we immediately get the following result.
\begin{proposition}
For $n\geq 2$, we have
$$R_n(x)=\sum_{k=1}^{\lrf{({n+1})/{2}}}\frac{1}{2^{k-2}}a(n,k)x^k(1+x)^{n-1-k}.$$
\end{proposition}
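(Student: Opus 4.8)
The plan is to recognize the asserted formula as the specialization $\delta=1$ of the equivalence in Theorem~\ref{MnxThm}, applied to the Eulerian polynomial. First I would set $\delta=1$, $M_n(x)=A_n(x)$, and $N_n(x)=R_n(x)$. With this choice the Foata--Sch\"utzenberger expansion
$$A_n(x)=\sum_{k=1}^{\lrf{({n+1})/{2}}}a(n,k)x^k(1+x)^{n+1-2k}$$
is exactly the required representation $M_n(x)=\sum_k M(n,k)x^k(1+x)^{n+\delta-2k}$, once we put $M(n,k)=a(n,k)$ together with the convention $M(n,0)=a(n,0)=0$. The vanishing of the $k=0$ coefficient is consistent both with the recurrence for $a(n,k)$ and with the fact that $A_n(x)$ is divisible by $x$. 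Thus the coefficients $M(n,k)$ needed to invoke the theorem are supplied directly by the known $\gamma$-expansion of $A_n(x)$.

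Next I would verify the standing hypothesis of Theorem~\ref{MnxThm}, namely that $A_n(x)$ is symmetric of degree $n+\delta=n+1$. This is the classical palindromicity of the Eulerian polynomial: the reversal $\pi\mapsto\pi(n)\cdots\pi(1)$ on $\msn$ sends $\des(\pi)$ to $n-1-\des(\pi)$, whence
$$A_n(x)=\sum_{\pi\in\msn}x^{\des(\pi)+1}=\sum_{\pi\in\msn}x^{\,n-\des(\pi)}=x^{n+1}A_n(1/x).$$
So the coefficients of $A_n(x)$ are symmetric about the center $(n+1)/2$, matching the exponent $n+\delta=n+1$ demanded by the theorem.

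With these identifications in place, the David--Barton identity~\eqref{RnxAnx} is literally~\eqref{NnxMnx01} with $\delta=1$, $N_n(x)=R_n(x)$, and $M_n(x)=A_n(x)$. Theorem~\ref{MnxThm} then guarantees that~\eqref{NnxMnx01} is equivalent to~\eqref{NnxMnx02}, so substituting $\delta=1$ and $M(n,k)=a(n,k)$ into~\eqref{NnxMnx02} yields
$$R_n(x)=\sum_{k=0}^{\lrf{(n+1)/2}}\frac{1}{2^{k-2}}a(n,k)x^k(1+x)^{n-1-k}.$$
Since $a(n,0)=0$ the $k=0$ summand drops out, and starting the sum at $k=1$ gives precisely the claimed identity, valid for $n\geq 2$ because that is the range in which~\eqref{RnxAnx} holds.

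I do not anticipate a genuine obstacle: the statement is a direct specialization of Theorem~\ref{MnxThm}, and all the substantive computation—the algebraic manipulation proving~\eqref{NnxMnx01}$\iff$\eqref{NnxMnx02}—has already been carried out in that theorem. The only points needing a moment's care are reading off the correct value $\delta=1$, by comparing the exponents $n-\delta$ and $n+\delta$ in~\eqref{NnxMnx01} against $n-1$ and $n+1$ in~\eqref{RnxAnx}, and recording that the $\gamma$-expansion of $A_n(x)$ begins at $k=1$ so that the summation range of the conclusion agrees with the stated one.
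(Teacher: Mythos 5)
Your proposal is correct and is exactly the paper's argument: the paper proves this proposition by invoking Theorem~\ref{MnxThm} with $\delta=1$, $M_n(x)=A_n(x)$, $M(n,k)=a(n,k)$ together with the David--Barton identity~\eqref{RnxAnx}, which is what you do. The only difference is that you spell out the details the paper leaves implicit (the palindromicity of $A_n(x)$, the identification of $\delta$, and the vanishing of the $k=0$ term), all of which you handle correctly.
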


Let $\pm[n]=\{\pm1,\pm2,\ldots,\pm n\}$.
Let $B_n$ be the hyperoctahedral group of rank $n$.
Elements of $B_n$ are signed permutations of $\pm[n]$ with the property that $\pi(-i)=-\pi(i)$ for all $i\in [n]$.
In the sequel, we always assume that signed permutations in $B_n$ are prepended by 0. That is, we identify a signed permutation
$\pi=\pi(1)\cdots \pi(n)$ with the word $\pi(0)\pi(1)\cdots \pi(n)$, where $\pi(0)=0$.
A type $B$ descent is an index $i\in \{0,1,\ldots,n-1\}$ such that $\pi(i)>\pi(i+1)$. Let $\des^B(\pi)$ be the number of type $B$ descents of $\pi$.
The {\it type $B$ Eulerian polynomials} are defined by
$$B_n(x)=\sum_{\pi\in B_n}x^{\des_B(\pi)}.$$
It is well known that
$$B_n(x)=\sum_{k=0}^{\lrf{{n}/{2}}}b(n,k)x^k(1+x)^{n-2k},$$
where the numbers $b(n,k)$ satisfy the recurrence relation
\begin{equation}\label{bnk-recu}
b(n,k)=(1+2k)b(n-1,k)+4(n-2k+1)b(n-1,k-1),
\end{equation}
with the initial conditions $b(1,0)=1$ and $b(1,k)=0$ for $k\neq 0$ (see~\cite{Athanasiadis17,Chow08,Petersen15}).

Define
\begin{equation}\label{bnxdef}
b_n(x)=\sum_{k=0}^{\lrf{{n}/{2}}}\frac{1}{2^k}b(n,k)x^k(1+x)^{n-k}.
\end{equation}
Then by Theorem~\ref{MnxThm}, we get the following result.
\begin{proposition}
For $n\geq 1$, we have
\begin{equation*}
b_n(x)=\left(\frac{1+x}{2}\right)^{n}(1+w)^{n}B_n\left(\frac{1-w}{1+w}\right).
\end{equation*}
\end{proposition}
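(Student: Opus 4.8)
The plan is to derive this identity as an immediate consequence of Theorem~\ref{MnxThm}, so the only real work is bookkeeping: I must identify the correct specialization of the parameters $\delta$, $M_n$, and $M(n,k)$. First I would observe that the type $B$ Eulerian polynomial $B_n(x)$ is symmetric, and that its $\gamma$-expansion
$$B_n(x)=\sum_{k=0}^{\lrf{n/2}}b(n,k)x^k(1+x)^{n-2k}$$
is precisely of the shape required of $M_n(x)$ in the hypothesis of Theorem~\ref{MnxThm}. Comparing the exponent $n-2k$ of $(1+x)$ here with the exponent $n+\delta-2k$ in the theorem's template for $M_n(x)$, I read off that the matching value is $\delta=0$, with $M_n(x)=B_n(x)$ and $M(n,k)=b(n,k)$.

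Next I would substitute $\delta=0$ into the equivalent condition~\eqref{NnxMnx02}. This yields
$$N_n(x)=\sum_{k=0}^{\lrf{n/2}}\frac{1}{2^{k}}b(n,k)\,x^k(1+x)^{n-k},$$
since $\tfrac{1}{2^{k-2\delta}}=\tfrac{1}{2^{k}}$ and $n-\delta-k=n-k$ when $\delta=0$. But the right-hand side is exactly the defining expression~\eqref{bnxdef} for $b_n(x)$. Hence $N_n(x)=b_n(x)$ satisfies~\eqref{NnxMnx02} with these choices of parameters.

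Finally, I would invoke the forward direction of the ``if and only if'' in Theorem~\ref{MnxThm}: since $b_n(x)$ satisfies~\eqref{NnxMnx02} for $\delta=0$ and $M(n,k)=b(n,k)$, it must also satisfy~\eqref{NnxMnx01}, namely
$$b_n(x)=\left(\frac{1+x}{2}\right)^{n}(1+w)^{n}B_n\left(\frac{1-w}{1+w}\right),$$
which is the claimed identity. I do not anticipate any genuine obstacle here: the algebraic substitutions carried out in the proof of Theorem~\ref{MnxThm} already dispatch all the routine manipulation, and the entire content of the proposition is the recognition that $\delta=0$ is the correct specialization together with the verification that the definition~\eqref{bnxdef} coincides verbatim with~\eqref{NnxMnx02}. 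The only point demanding a moment's care is confirming that the summation ranges agree, i.e.\ that $\lrf{(n+\delta)/2}=\lrf{n/2}$ when $\delta=0$, which is immediate.
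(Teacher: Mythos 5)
Your proposal is correct and is exactly the paper's own argument: the paper defines $b_n(x)$ by~\eqref{bnxdef} and then cites Theorem~\ref{MnxThm} with $M_n(x)=B_n(x)$, $M(n,k)=b(n,k)$, $\delta=0$, using the direction from~\eqref{NnxMnx02} to~\eqref{NnxMnx01}, just as you do. Your identification of the parameters and the check that~\eqref{bnxdef} coincides with~\eqref{NnxMnx02} supply precisely the bookkeeping the paper leaves implicit.
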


Combining~\eqref{bnk-recu} and~\eqref{bnxdef}, we see that the polynomials $b_n(x)$ satisfy the recurrence relation
\begin{equation}\label{bnkx-recu}
b_{n+1}(x)=(1+x+2nx^2)b_n(x)+2x(1-x^2)b_n'(x),
\end{equation}
with the initial conditions $b_0(x)=1,~b_1(x)=1+x$. For $n\geq 1$, we
define $b_n(x)=\frac{1+x}{x}c_n(x)$.
It follows from~\eqref{bnkx-recu} that the polynomials $c_n(x)$ satisfy the recurrence relation
\begin{equation*}
c_{n+1}(x)=(2nx^2+3x-1)c_n(x)+2x(1-x^2)c_n'(x).
\end{equation*}
Let $\widehat{B}_n=\{\pi\in B_n \mid \pi(1)>0\}$.
There is a combinatorial interpretation of $c_n(x)$ (see~\cite{Chow14,Zhao11}):
$$c_n(x)=\sum_{\pi\in \widehat{B}_n}x^{\altrun(\pi)}.$$
\section{The $q$-alternating runs polynomials}\label{Section:03}
For an alphabet $A$, let $\mathbb{Q}[[A]]$ be the rational commutative ring of formal power
series in monomials formed from letters in $A$. A {\it Chen's grammar} (which is known as context-free grammar) over
$A$ is a function $G: A\rightarrow \mathbb{Q}[[A]]$ that replaces a letter in $A$ by an element of $\mathbb{Q}[[A]]$, see~\cite{Chen93,Chen17,Ma19} for details.
The formal derivative $D:=D_G$ is a linear operator defined with respect to a context-free grammar $G$.
Following~\cite{Chen17}, a {\it grammatical labeling} is an assignment of the underlying elements of a combinatorial structure
with variables, which is consistent with the substitution rules of a grammar.

Let us now recall two results
on context-free grammars.
\begin{proposition}[{\cite[Theorem~6]{Ma132}}]\label{Ma131}
If $G=\{a\rightarrow ab,~b\rightarrow bc,~c\rightarrow b^2\}$, then
\begin{align*}
D^n(a)&=a\sum_{k=0}^nT_{n,k}b^kc^{n-k},~
D^n(a^2)=a^2\sum_{k=0}^nR_{n+1,k}b^kc^{n-k}.
\end{align*}
\end{proposition}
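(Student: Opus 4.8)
The plan is to prove both identities by induction on $n$, using only the substitution rules of $G$ together with the fact that the formal derivative $D$ obeys the Leibniz rule on products. The rules give $D(a)=ab$, $D(b)=bc$ and $D(c)=b^2$, and from these the engine of the whole argument is the single computation of $D$ on a monomial of the form $a\,b^kc^{n-k}$.

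First I would handle the identity $D^n(a)=a\sum_{k}T_{n,k}b^kc^{n-k}$. The base case $n=0$ reads $D^0(a)=a$, which matches the right-hand side because $T_{0,0}=1$. For the inductive step, assuming the formula at level $n$, I would apply $D$ term by term. Using $D(b^k)=k\,b^kc$ and $D(c^{n-k})=(n-k)b^2c^{n-k-1}$, the Leibniz rule gives the key expansion
\[
D(a\,b^kc^{n-k})=a\,b^{k+1}c^{n-k}+ak\,b^kc^{n-k+1}+a(n-k)b^{k+2}c^{n-k-1}.
\]
Summing against $T_{n,k}$ and collecting the coefficient of $b^jc^{n+1-j}$, the three groups of terms contribute $T_{n,j-1}$, $j\,T_{n,j}$ and $(n-j+2)T_{n,j-2}$, whose sum is exactly the right-hand side of the recurrence \eqref{Tnk-recu} for $T_{n+1,j}$. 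This closes the induction, with the understanding that $T_{n,k}=0$ for $k<0$ or $k>n$ takes care of the boundary indices.

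The second identity is proved in the same fashion, the only change being $D(a^2)=2a^2b$ in place of $D(a)=ab$; the base case now uses $R_{1,0}=1$. Starting from $D^n(a^2)=a^2\sum_k R_{n+1,k}b^kc^{n-k}$, the analogous monomial derivative is
\[
D(a^2b^kc^{n-k})=2a^2b^{k+1}c^{n-k}+a^2k\,b^kc^{n-k+1}+a^2(n-k)b^{k+2}c^{n-k-1},
\]
so the coefficient of $b^jc^{n+1-j}$ equals $2R_{n+1,j-1}+j\,R_{n+1,j}+(n-j+2)R_{n+1,j-2}$. I would then match this against the recurrence for $R_{n,k}$ after replacing $n$ by $n+1$ and $k$ by $j$, which yields precisely $R_{n+2,j}$.

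The computations are routine; the one place demanding care is the bookkeeping of index shifts, and in particular the off-by-one in the second identity, where $D^n$ produces $R_{n+1,\cdot}$ so that the inductive step must reproduce $R_{n+2,\cdot}$ and hence invokes the $R$-recurrence at level $n+1$. Checking that the factor $(n-k+1)$ in that recurrence becomes $(n-j+2)$ after this shift is the main thing to verify, and it is what makes the grammar's numerical output land on $R_{n+1,k}$ (rather than $R_{n,k}$) for the square $a^2$.
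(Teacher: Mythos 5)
Your proof is correct, and it follows essentially the same route as the source: the paper itself only cites this result from [Ma132, Theorem~6], but both that reference and this paper's own treatment of the generalized grammar $G_1=\{a\rightarrow qab,\ b\rightarrow bc,\ c\rightarrow b^2\}$ (deriving \eqref{DnaRnk} and the recurrence \eqref{Rnkq-recu}) proceed by exactly your induction: apply $D$ via the Leibniz rule to $a\,b^kc^{n-k}$ (resp.\ $a^2b^kc^{n-k}$) and match coefficients of $b^jc^{n+1-j}$ against the recurrences for $T_{n,k}$ and $R_{n,k}$. Your handling of the index shift in the second identity, where the $R$-recurrence must be invoked at level $n+1$ so that $(n-k+1)$ becomes $(n-j+2)$, is precisely the right bookkeeping.
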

\begin{proposition}[{\cite[Theorem~9]{Ma132}}]\label{Ma132}
If $G=\{a\rightarrow 2ab,~b\rightarrow bc,~c\rightarrow b^2\}$, then
\begin{align*}
D^n(a)&=a\sum_{k=0}^nR_{n+1,k}b^kc^{n-k}.
\end{align*}
\end{proposition}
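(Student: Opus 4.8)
The plan is to verify the grammatical identity $D^n(a)=a\sum_{k=0}^nR_{n+1,k}b^kc^{n-k}$ by induction on $n$, using the substitution rules $G=\{a\rightarrow 2ab,~b\rightarrow bc,~c\rightarrow b^2\}$. The base case $n=0$ reads $D^0(a)=a$, which matches $R_{1,0}b^0c^0=a$ since $R_{1,0}=1$. For the inductive step, I would assume the formula holds for $n$ and compute $D^{n+1}(a)=D(D^n(a))$ by applying the linear operator $D$ term-by-term to $a\sum_k R_{n+1,k}b^kc^{n-k}$.

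First I would work out the action of $D$ on a single monomial $ab^kc^{n-k}$. By the product rule for the formal derivative together with the grammar, $D(ab^kc^{n-k})$ splits into three contributions: differentiating $a$ gives $2ab\cdot b^kc^{n-k}=2ab^{k+1}c^{n-k}$; differentiating the factor $b^k$ gives $k\,ab^{k-1}(bc)c^{n-k}=k\,ab^kc^{n-k+1}$; and differentiating $c^{n-k}$ gives $(n-k)\,ab^k c^{n-k-1}(b^2)=(n-k)\,ab^{k+2}c^{n-k-1}$. Summing these, $D(ab^kc^{n-k})=a\bigl(2b^{k+1}c^{n-k}+k\,b^kc^{n+1-k}+(n-k)\,b^{k+2}c^{n-1-k}\bigr)$.

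Next I would collect the coefficient of $a\,b^jc^{n+1-j}$ in $D^{n+1}(a)=\sum_k R_{n+1,k}\,D(ab^kc^{n-k})$. A monomial $ab^jc^{n+1-j}$ receives a $k$-term from three sources: from the first contribution with $k=j-1$ (coefficient $2R_{n+1,j-1}$), from the second with $k=j$ (coefficient $jR_{n+1,j}$), and from the third with $k=j-2$ (coefficient $(n-(j-2))R_{n+1,j-2}=(n-j+2)R_{n+1,j-2}$). Hence the total coefficient is $jR_{n+1,j}+2R_{n+1,j-1}+(n-j+2)R_{n+1,j-2}$, and I would recognize this as exactly the recurrence $R_{n+2,j}=jR_{n+1,j}+2R_{n+1,j-1}+(n+1-j+1)R_{n+1,j-2}$ for $R_{n,k}$ displayed in the introduction (with $n$ there replaced by $n+1$). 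This identifies the coefficient as $R_{n+2,j}$, completing the induction.

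The computation is entirely routine once the bookkeeping of index shifts is set up correctly, so there is no serious obstacle; the only point requiring care is matching the three index shifts ($k\mapsto k+1$, $k\mapsto k$, $k\mapsto k+2$) against the three terms of the stated recurrence for $R_{n,k}$, and in particular confirming that the factor $(n-k+1)$ in that recurrence aligns with the $(n-k)$ coming from differentiating $c^{n-k}$ after the index substitution. I expect this matching to be the main thing to check; the rest follows immediately by comparing coefficients of $b^jc^{n+1-j}$.
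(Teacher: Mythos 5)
Your induction is correct: the three index shifts produce the coefficient $jR_{n+1,j}+2R_{n+1,j-1}+(n-j+2)R_{n+1,j-2}$ of $ab^jc^{n+1-j}$, which is exactly the stated recurrence for $R_{n,k}$ with $n$ replaced by $n+1$, and the base case and boundary conventions ($R_{n,k}=0$ for $k<0$) are handled properly. The paper itself does not prove this proposition but cites it from \cite{Ma132}; however, your argument is precisely the computation the paper performs for the $q$-grammar $G_1$ in Section~\ref{Section:03} (deriving~\eqref{Rnkq-recu} from~\eqref{qxy-grammar}, which at $q=2$ recovers this statement), so it is essentially the same approach.
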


Combining Leibniz's formula and Proposition~\ref{Ma131}, we see that
$$R_{n+1}(x)=\sum_{k=0}^n\binom{n}{k}T_k(x)T_{n-k}(x).$$
Motivated by Propositions~\ref{Ma131} and~~\ref{Ma132}, it is natural to consider the grammar
\begin{equation}\label{qxy-grammar}
G_1=\{a\rightarrow qab,~b\rightarrow bc,~c\rightarrow b^2\}.
\end{equation}
Note that
$D_{G_1}(a)=qab,~D_{G_1}^2(a)=a(q^2b^2+qbc)$.
By induction, it is easy to verify that
\begin{equation}\label{DnaRnk}
D_{G_1}^n(a)=a\sum_{k=0}^nR_{n,k}(q)b^kc^{n-k}.
\end{equation}
It follows from~\eqref{qxy-grammar} that
\begin{align*}
  D_{G_1}^{n+1}(a)&=D_{G_1}\left(a\sum_{k=0}^nR_{n,k}(q)b^kc^{n-k}\right)\\
  &=a\sum_{k}R_{n,k}(q)\left(kb^kc^{n-k+1}+qb^{k+1}c^{n-k}+(n-k)b^{k+2}c^{n-k-1}\right),
\end{align*}
which leads to the recurrence relation
\begin{equation}\label{Rnkq-recu}
R_{n+1,k}(q)=kR_{n,k}(q)+qR_{n,k-1}(q)+(n-k+2)R_{n,k-2}(q).
\end{equation}
The {\it $q$-alternating run polynomials} are defined by
$$R_n(x;q)=\sum_{k=0}^nR_{n,k}(q)x^k.$$
In particular, $R_n(x;1)=T_n(x)$, $R_n(x;2)=R_{n+1}(x).$
The first few $R_n(x;q)$ are given as follows:
\begin{align*}
R_0(x;q)=1,~
 R_1(x;q)=qx,~
 R_2(x;q)= qx(1 + q x),~
  R_3(x;q)= qx(1 + 3 q x + x^2 + q^2 x^2).
\end{align*}

We define
$$R(x,z;q):=\sum_{n=0}^{\infty}R_n(x;q)\frac{z^n}{n!}.$$
\begin{proposition}\label{prop-Rxzq}
We have
$R(x,z;q)=T^q(x,z)$,
where $T(x,z)$ is given by~\eqref{EGF-Tnx}.
Therefore,
\begin{equation}\label{Dna-EGF}
\sum_{n=0}^\infty D_{G_1}^n(a)\frac{z^n}{n!}=aR\left(\frac{b}{c},cz;q\right)=aT^q\left(\frac{b}{c},cz\right).
\end{equation}
Moreover, we have $R_n(x;-q)=R_n(-x;q)$ and $R_n(-x;-q)=R_n(x;q)$.
\end{proposition}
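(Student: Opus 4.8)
The plan is to prove the three assertions in sequence, establishing the exponential generating function identity first and then deriving the two symmetry relations as consequences. To show $R(x,z;q)=T^q(x,z)$, I would work from the grammar $G_1=\{a\rightarrow qab,~b\rightarrow bc,~c\rightarrow b^2\}$ in~\eqref{qxy-grammar} together with formula~\eqref{DnaRnk}. The key observation is that $G_1$ differs from the grammar of Proposition~\ref{Ma131} only in the rule for $a$, where the coefficient $q$ has been inserted; the rules for $b$ and $c$ are unchanged. Consider the substitution that evaluates $b/c=x$ and rescales by $c$. Writing $F(z)=\sum_{n\geq 0}D_{G_1}^n(a)z^n/n!$, I would verify that $F(z)=a\,R(b/c,cz;q)$ by matching coefficients of $z^n/n!$ using~\eqref{DnaRnk}, since $\sum_k R_{n,k}(q)b^kc^{n-k}=c^n\sum_k R_{n,k}(q)(b/c)^k=c^n R_n(b/c;q)$, which gives exactly~\eqref{Dna-EGF} once the substitution $z\mapsto cz$ absorbs the factor $c^n$.

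The main obstacle, and the heart of the argument, is establishing $R(x,z;q)=T^q(x,z)$. The natural route is to show that both sides satisfy the same first-order differential equation in $z$ together with the same initial condition at $z=0$. For the $q$-side, I would compute $\partial_z R(x,z;q)$ by extracting the grammar's action on $a$: since $D_{G_1}(a)=qab$, the exponential generating function for $D_{G_1}^n(a)$ should satisfy a differential relation reflecting the chain-rule structure, namely that differentiating $a T^q(b/c,cz)$ with respect to $z$ reproduces $D_{G_1}$ applied termwise. Concretely, because the $b,c$ rules are identical to those of Proposition~\ref{Ma131} (for which $D^n(a)=a\sum_k T_{n,k}b^kc^{n-k}$, so that $\sum_n D^n(a)z^n/n!=aT(b/c,cz)$), I would argue that the effect of inserting the coefficient $q$ on the $a$-rule is precisely to raise the base EGF to the $q$-th power. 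The cleanest formulation: if $D_0$ is the operator of Proposition~\ref{Ma131} and we set $u=\sum_n D_0^n(\log a)z^n/n!$ appropriately, then $D_{G_1}$ acts on $a$ as $q$ times the logarithmic derivative, so that $\sum_n D_{G_1}^n(a)z^n/n!=a\exp(q\cdot(\text{something}))=a\,(T^q)$. I expect the technical care to lie in justifying this logarithmic/power relationship rigorously, either by induction on~\eqref{Rnkq-recu} comparing against the coefficients of $T^q(x,z)$, or by directly verifying that $G=a\,T^q(b/c,cz)$ solves the grammar-induced ODE $\partial_z G = D_{G_1}(G)|_{\text{EGF}}$ with $G(0)=a$.

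With the EGF identity in hand, the symmetry relations $R_n(x;-q)=R_n(-x;q)$ and $R_n(-x;-q)=R_n(x;q)$ follow formally and should require only a short verification. The second relation is immediate from the first: replacing $x\mapsto -x$ in $R_n(x;-q)=R_n(-x;q)$ yields $R_n(-x;-q)=R_n(x;q)$. For the first relation, I would not need the EGF at all; instead I would induct on $n$ using the recurrence~\eqref{Rnkq-recu}. The base cases $R_0(x;q)=1$ and $R_1(x;q)=qx$ satisfy $R_1(x;-q)=-qx=qx|_{x\mapsto -x}=R_1(-x;q)$. For the inductive step, I would substitute $q\mapsto -q$ into~\eqref{Rnkq-recu} and compare with the recurrence obtained by substituting $x\mapsto -x$; since each monomial $R_{n,k}(q)x^k$ carries a factor that tracks the parity of $k$ against the parity of the power of $q$, the single factor of $q$ multiplying $R_{n,k-1}(q)$ in~\eqref{Rnkq-recu} exactly accounts for the sign bookkeeping, so the two substituted recurrences coincide. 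The cleanest phrasing is that $R_n(x;q)$ is, monomial by monomial, a sum of terms $c_{n,j,k}q^j x^k$ in which $j\equiv k\pmod 2$, making the substitutions $(x,q)\mapsto(-x,q)$ and $(x,q)\mapsto(x,-q)$ produce identical overall signs $(-1)^k=(-1)^j$ on every term.
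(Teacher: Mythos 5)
Your proposal is correct, and for the central identity $R(x,z;q)=T^q(x,z)$ your rigorous fallback is essentially the paper's own method: the paper rewrites the recurrence~\eqref{Rnkq-recu} as the partial differential equation~\eqref{PDE}, namely $(1-x^2z)\partial_z R=x(1-x^2)\partial_x R+qxR$, checks that $T^q(x,z)$ satisfies it (via the $q=1$ equation for $T$ coming from~\eqref{Tnk-recu} and the chain rule, since $\partial(T^q)=qT^{q-1}\partial T$), and fixes the solution by the initial conditions; your coefficient-matching derivation of~\eqref{Dna-EGF} from~\eqref{DnaRnk} is likewise what the paper leaves implicit. Two remarks on your middle paragraph: first, because of the factor $k$ in~\eqref{Rnkq-recu} the governing equation is not an ODE in $z$ alone but involves $x\partial_x$, so uniqueness should be argued via the coefficient recurrence $R_{n+1}(x;q)=nx^2R_n+x(1-x^2)R_n'+qxR_n$ with $R_0=1$ (your induction-on-\eqref{Rnkq-recu} alternative does exactly this); second, your ``logarithmic/power'' heuristic is vaguer than needed but is in fact salvageable by a route the paper does not take: for any derivation $D$ the map $f\mapsto\sum_n D^n(f)z^n/n!$ is a ring homomorphism, so writing $a=\tilde{a}^q$ with the rule $\tilde{a}\rightarrow\tilde{a}b$ reduces $G_1$ to the grammar of Proposition~\ref{Ma131} and yields $\sum_n D_{G_1}^n(a)z^n/n!=\bigl(\tilde{a}\,T(b/c,cz)\bigr)^q$ for positive integers $q$, hence for all $q$ by polynomiality in $q$. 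Where you genuinely diverge is the symmetry relations: the paper deduces $R_n(x;-q)=R_n(-x;q)$ and $R_n(-x;-q)=R_n(x;q)$ by a routine check at the level of the generating function $T^q(x,z)$, whereas you prove by induction on~\eqref{Rnkq-recu} that every monomial $q^jx^k$ appearing in $R_n(x;q)$ satisfies $j\equiv k\pmod 2$ (each of the three terms $kR_{n,k}$, $qR_{n,k-1}$, $(n-k+2)R_{n,k-2}$ preserves this parity constraint, and the base cases do too). Your parity argument is correct, more elementary, and independent of the EGF identity; what the paper's route buys instead is that the symmetry comes for free once the closed form is known, with no separate induction.
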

\begin{proof}
By rewriting~\eqref{Rnkq-recu} in terms of generating function $R(x,z;q)$, we obtain
\begin{equation}\label{PDE}
(1-x^2z)\frac{\partial}{\partial z}R(x,z;q)=x(1-x^2)\frac{\partial}{\partial x}R(x,z;q)+qxR(x,z;q).
\end{equation}
It is routine to check that the generating function $T^q(x,z)$
satisfies~\eqref{PDE}. Also, this generating function gives $T^q(0,z)=T^q(x,0)=1$.
Hence $R(x,z;q)=T^q(x,z)$.
It is routine to check that
\begin{equation*}\label{Rxzq-symmetric}
R(x,z;-q)=R(-x,z;q),~R(-x,z;-q)=R(x,z;q)
\end{equation*}
which leads to the desired result.
\end{proof}

We say that $\pi\in\msn$ is a
circular permutation if it has only one cycle. Let $A=\{x_1,x_2,\ldots, x_k\}$
be a finite set of positive integers,
and let $\cc_A$ be the set of all circular
permutations of $A$. We will write a permutation $w\in\cc_A$ by using its
canonical presentation $w=y_1y_2\cdots y_k$, where $y_1=\min A, y_i=w^{i-1}(y_1)$ for $2\leq i\leq k$ and $y_1=w^k(y_1)$.
A {\it cycle peak} (resp.~{\it cycle double ascent}, {\it cycle double descent}) of $w$ is an entry $y_i$, $2\leq i\leq k$, such that
$y_{i-1}<y_i>y_{i+1}$ (resp.~$y_{i-1}<y_i<y_{i+1}$, $y_{i-1}>y_i>y_{i+1}$), where we set $y_{k+1}=\infty$.
Let $\cpkk(w)$ (resp.~$\cdasc(w)$,~$\cddes(w)$, $\cyc(w)$) be the number of cycle peaks (resp.~cycle double ascents, cycle double descents, cycles) of $w$.
\begin{definition}
A cycle run of a circular permutation $w$ is an alternating run of $w$ endowed with a $\infty$ in the end.
Let $\crun(w)$ be the number of {\it cycle runs} of $w$.
\end{definition}

It is clear that $\crun(w)=2\cpkk(w)+1$.
In the following discussion we always write $\pi\in\msn$ in standard cycle decomposition: $\pi=w_1\cdots w_k$, where the cycles are written in increasing order of their smallest entry and each of these cycles is expressed in canonical presentation.
We define
$$\crun(\pi):=\sum_{i=1}^k\crun(w_i).$$
In particular, $\crun((1)(2)\cdots(n))=\sum_{i=1}^n\crun(i)=\sum_{i=1}^n\altrun(i\infty)=n$.
We can now present the second main result.
\begin{theorem}
For $n\geq 1$, we have
\begin{equation}\label{Rnxq-des}
R_n(x;q)=\sum_{\pi\in\msn}x^{\crun(\pi)}q^{\cyc(\pi)}.
\end{equation}
\end{theorem}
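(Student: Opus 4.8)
The plan is to prove \eqref{Rnxq-des} by equipping the permutations of $\msn$, written in standard cycle decomposition, with a grammatical labeling compatible with $G_1=\{a\to qab,\ b\to bc,\ c\to b^2\}$, and then reading off the result from the already-established expansion \eqref{DnaRnk}. Concretely, to each $\pi\in\msn$ I would attach the monomial $L(\pi)=q^{\cyc(\pi)}b^{\crun(\pi)}c^{\,n-\crun(\pi)}$, so that the claimed identity becomes equivalent to $D_{G_1}^n(a)=a\sum_{\pi\in\msn}L(\pi)$. Indeed, once this is known, setting $b=x$ and $c=1$ and comparing with \eqref{DnaRnk} gives $\sum_{\pi\in\msn}x^{\crun(\pi)}q^{\cyc(\pi)}=\sum_{k}R_{n,k}(q)x^k=R_n(x;q)$, as desired. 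The exponent of $c$ is always nonnegative, since summing $\crun(w)=2\cpkk(w)+1$ over the cycles of $\pi$ gives $\crun(\pi)=2\cpkk(\pi)+\cyc(\pi)\le n$.

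I would then prove $D_{G_1}^n(a)=a\sum_{\pi}L(\pi)$ by induction on $n$, the base case $n=1$ being $D_{G_1}(a)=qab=a\,L((1))$. For the inductive step I would use the standard bijection recovering each $\sigma\in\msnn$ from a unique $\pi\in\msn$ together with a choice among the $n+1$ positions at which the largest letter $n+1$ can be inserted, and match the three substitution rules of $G_1$ to the three types of insertion. The rule $a\to qab$ records the creation of the new singleton cycle $(n+1)$, which raises both $\cyc$ and $\crun$ by $1$ and leaves the exponent of $c$ fixed. The rule $b\to bc$, giving $D_{G_1}(b^{r})=r\,b^{r}c$, must correspond to the insertion positions inside the existing cycles that leave $\crun$ unchanged, of which there should be exactly $r=\crun(\pi)$. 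Finally, the rule $c\to b^{2}$, giving $D_{G_1}(c^{\,n-r})=(n-r)\,b^{2}c^{\,n-r-1}$, must correspond to the remaining $n-\crun(\pi)$ interior positions, each of which raises $\crun$ by exactly $2$.

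The heart of the argument, and the step I expect to be the most delicate, is this bookkeeping for a single cycle $w=y_1\cdots y_k$: after adjoining $\infty$, inserting $n+1$ (a new maximum, still below $\infty$) into any of the $k$ gaps of $y_1\cdots y_k\infty$ changes the number of alternating runs by either $0$ or $2$, and the number of $0$-gaps equals $\crun(w)=2\cpkk(w)+1$. I would establish this by a local analysis at the insertion gap: inserting $n+1$ immediately before $\infty$ always preserves the run count, while inserting it into a gap $(y_j,y_{j+1})$ turns $n+1$ into a new cycle peak, and a short case check on the types (cycle peak, cycle valley, cycle double ascent, cycle double descent) of the two bordering entries shows the run count is preserved exactly when the gap borders a cycle peak and increases by $2$ otherwise. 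Counting then gives two preserving gaps per cycle peak (one on each side, with no overlap since peaks are separated by valleys) together with the single gap before $\infty$, for a total of $2\cpkk(w)+1$ preserving gaps. Summing over the cycles of $\pi$ yields $\crun(\pi)$ run-preserving positions and $n-\crun(\pi)$ run-increasing positions, which is precisely what the rules $b\to bc$ and $c\to b^2$ demand. With this lemma in hand the inductive step closes and the theorem follows.
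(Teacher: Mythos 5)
Your proposal is correct and is essentially the paper's own proof: the paper likewise works with the grammar $G_1$, the expansion \eqref{DnaRnk}, and induction via insertion of the largest entry, and its grammatical labeling (label $b$ on the gap before each $\infty$ and on the gaps adjacent to cycle peaks, label $c$ on gaps before cycle double ascents and after cycle double descents) encodes exactly your classification of run-preserving versus run-increasing insertion positions, including the count $2\cpkk(w)+1=\crun(w)$ per cycle. The only difference is presentational: you carry the global monomial $q^{\cyc(\pi)}b^{\crun(\pi)}c^{\,n-\crun(\pi)}$ and count gaps by type, whereas the paper attaches the variables to the gaps themselves.
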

\begin{proof}
For $\pi\in\msn$, we first put a $\infty$ in the end of each cycle.
We then introduce a grammatical labeling of $\pi$ as follows:
\begin{itemize}
  \item [\rm ($L_1$)] Put a subscript label $q$ at the end of each cycle of $\pi$;
 \item [\rm ($L_2$)] Put a superscript label $a$ at the end of $\pi$;
\item [\rm ($L_3$)] Put a superscript label $b$ before each $\infty$;
\item [\rm ($L_4$)] If $\pi(i)$ is a cycle peak, then put a superscript label $b$ before $\pi(i)$ and a superscript label $b$ right after $\pi$;
\item [\rm ($L_5$)] If $\pi(i)$ is a cycle double ascents, then put the superscript label $c$ before $\pi(i)$;
\item [\rm ($L_6$)] If $\pi(i)$ is a cycle double descents, then put the superscript label $c$ right after $\pi(i)$.
\end{itemize}
The weight of $\pi$ is the product of its labels.
When $n=1,2$, we have $$\ms_1=\{(1^b\infty)_q^a\},~\ms_2=\{(1^b\infty)_q(2^b\infty)^a_q,~(1^c2^b\infty)_q^a\}.$$
Then the weight of $(1^b)_q^a$ is given by $D_{G_1}(a)$, and the sum of weights of the elements in $\ms_2$ is given by $D_{G_1}^2(a)$.
Hence the result holds for $n=1,2$.
Let $$r_n(i,j)=\{\pi\in\msn: \crun(\pi)=i,~\cyc(\pi)=j\}.$$
Suppose we get all labeled permutations in $r_{n-1}(i,j)$, where $n\geq 3$. Let
$\pi'$ be obtained from $\pi\in r_{n-1}(i,j)$ by inserting the entry $n$.
We distinguish the following four cases:
\begin{itemize}
  \item [\rm ($c_1$)] If we insert $n$ as a new cycle, then $\pi'\in r_{n-1}(i+1,j+1)$. This case corresponds to the substitution rule $a\rightarrow qab$.
 \item [\rm ($c_2$)] If we insert $n$ before a $\infty$, then $\pi'\in r_{n-1}(i,j)$.
 This case corresponds to the substitution rule $b\rightarrow bc$;
\item [\rm ($c_3$)] If we insert $n$ before or right after a cycle peak, then $\pi'\in r_{n-1}(i,j)$. This case corresponds to the substitution rule $b\rightarrow bc$;
\item [\rm ($c_4$)] If we insert $n$ before a cycle double ascents or right after a cycle double descents, then $\pi'\in r_{n-1}(i+2,j)$. This case corresponds to the substitution rule $c\rightarrow b^2$.
\end{itemize}
In each case, the insertion of $n$ corresponds to one substitution rule in the grammar~\eqref{qxy-grammar}.
It is easy to check that the action of $D_{G_1}$ on elements of $\ms_{n-1}$ generates all elements of $\ms_n$. Using~\eqref{DnaRnk} and by induction, we present a constructive proof of~\eqref{Rnxq-des}.
This completes the proof.
\end{proof}

%
%
We define
$$R_n(x,y;q)=\sum_{\pi\in\msn}x^{{\crun}(\pi)}y^{\fix(\pi)}q^{\cyc(\pi)},$$
$$R(x,y,z;q)=\sum_{n=0}^\infty R_n(x,y;q)\frac{z^n}{n!}.$$
By using the principle of inclusion-exclusion, it is routine to verify that
$$R_n(x,y;q)=\sum_{i=0}^n\binom{n}{i}(qxy-qx)^iR_{n-i}(x;q).$$
Hence
\begin{equation}\label{Rxyzq}
R(x,y,z;q)=e^{qx(y-1)z}R(x,z;q)=e^{qx(y-1)z}T^q(x,z).
\end{equation}
%
%

A permutation $\pi\in\msn$ is a {\it derangement} if $\pi(i)\neq i$ for any $i\in [n]$. Let $\mdn_n$ denote the set of derangements in $\msn$. Then
$$R_n(x,0;1)=\sum_{\pi\in\mdn_n}x^{{\crun}(\pi)}.$$

\begin{proposition}
Set $d_n(x)=R_n(x,0;1)$. Then the polynomials $d_n(x)$ satisfy the recurrence
\begin{equation}\label{dnx-recu}
d_{n+1}(x)=nx^2d_n(x)+x(1-x^2)d_n'(x)+nxd_{n-1}(x),
\end{equation}
with the initial conditions $d_0(x)=1,~d_1(x)=0$. In particular, $d_n(-1)=-(n-1)$ for $n\geq 1$.
\end{proposition}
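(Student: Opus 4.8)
The plan is to argue at the level of exponential generating functions and to reduce the claimed recurrence to a single first-order linear partial differential equation that can be checked against the one already available for $T(x,z)$. First I would record the generating function for $d_n(x)$. Since $d_n(x)=R_n(x,0;1)$, setting $y=0$ and $q=1$ in \eqref{Rxyzq} gives
\[
D(x,z):=\sum_{n\ge0}d_n(x)\frac{z^n}{n!}=R(x,0,z;1)=e^{-xz}T(x,z),
\]
with $T(x,z)$ as in \eqref{EGF-Tnx}. Thus the entire problem is encoded in this one product, and I never need the closed form of $T$ explicitly, only the partial differential equation it satisfies.

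Next I would translate the target recurrence \eqref{dnx-recu} into a statement about $D$. Multiplying \eqref{dnx-recu} by $z^n/n!$ and summing over $n$, and using the standard dictionary $\sum_n d_{n+1}\frac{z^n}{n!}=\partial_z D$, $\sum_n n\,d_n\frac{z^n}{n!}=z\,\partial_z D$, $\sum_n d_n'\frac{z^n}{n!}=\partial_x D$, and $\sum_n n\,d_{n-1}\frac{z^n}{n!}=zD$, the recurrence becomes exactly equivalent to the first-order equation
\[
(1-x^2z)\,\partial_z D = x(1-x^2)\,\partial_x D + xz\,D .
\]
I would then verify that $D=e^{-xz}T$ satisfies this equation by invoking \eqref{PDE} with $q=1$, namely $(1-x^2z)\,\partial_z T=x(1-x^2)\,\partial_x T+xT$. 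Substituting $\partial_z D=e^{-xz}(\partial_z T-xT)$ and $\partial_x D=e^{-xz}(\partial_x T-zT)$, cancelling the common factor $e^{-xz}$, and applying the $T$-equation once to eliminate $(1-x^2z)\partial_z T$, the whole expression collapses to $xT\bigl[-(1-x^2z)+1+z(1-x^2)-z\bigr]=0$, since the bracket is identically $0$. Reading off the coefficient of $z^n/n!$ from the verified equation reproduces \eqref{dnx-recu}, and inspecting $D=e^{-xz}T=1+0\cdot z+\cdots$ (using $T(x,z)=1+xz+\cdots$) supplies the initial data $d_0(x)=1$ and $d_1(x)=0$, which also matches $\mdn_1=\varnothing$.

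For the evaluation $d_n(-1)=-(n-1)$ I would simply specialize. At $x=-1$ the coefficient $x(1-x^2)$ of the derivative term vanishes, so \eqref{dnx-recu} degenerates into the purely numerical recurrence $d_{n+1}(-1)=n\,d_n(-1)-n\,d_{n-1}(-1)$. Starting from $d_0(-1)=1$ and $d_1(-1)=0$, a one-line induction using $-n(n-1)+n(n-2)=-n$ yields $d_n(-1)=-(n-1)$ for all $n\ge1$.

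The main obstacle is the partial-differential-equation verification in the second step: it is the only place where the structure of $T(x,z)$ genuinely enters, and the computation must be organized so that the chain-rule contributions from the factor $e^{-xz}$ combine with a single use of the $T$-equation to make the residual bracket cancel identically, rather than by brute differentiation of the explicit formula \eqref{EGF-Tnx}. Once that identity is in hand, extracting coefficients and running the $x=-1$ induction are entirely routine.
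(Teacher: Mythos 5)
Your proposal is correct and follows essentially the same route as the paper: both pass to the exponential generating function $d(x,z)=e^{-xz}T(x,z)$ via \eqref{Rxyzq}, use the first-order PDE satisfied by $T(x,z)$ (your citation of \eqref{PDE} at $q=1$ and the paper's derivation from \eqref{Tnk-recu} give the identical equation), and transfer it through the factor $e^{-xz}$ to obtain the PDE equivalent to \eqref{dnx-recu}. You additionally write out the chain-rule cancellation, the initial conditions, and the $x=-1$ induction, all of which the paper leaves implicit and all of which check out.
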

\begin{proof}
Let $d(x,z)=\sum_{n=0}^\infty d_n(x)\frac{z^n}{n!}$.
It follow from~\eqref{Rxyzq} that
\begin{equation}\label{dxz-Txz}
d(x,z)=e^{-xz}T(x,z).
\end{equation}
By rewriting~\eqref{Tnk-recu} in terms of generating function $T(x,z)$, we obtain
$$(1-x^2z)\frac{\partial}{\partial z}T(x,z)=xT(x,z)+x(1-x^2)\frac{\partial}{\partial x}T(x,z).$$
Hence
$$(1-x^2z)\frac{\partial}{\partial z}d(x,z)=xzd(x,z)+x(1-x^2)\frac{\partial}{\partial x}d(x,z),$$
which yields the desired recurrence relation.
\end{proof}

Let $d_n(x)=\sum_{k=0}^nd_{n,k}x^k$. By using~\eqref{dxz-Txz}, it is not hard to verify that
$$\sum_{n=0}^\infty d_{n,n}\frac{z^n}{n!}=\frac{e^{-x}}{\tan x+\sec x}.$$

\section{Semi-$\gamma$-positive polynomials}\label{semi-gamma}

Let $g(x)=\sum_{i=0}^{2n}g_ix^i$ be a symmetric polynomial. Note that
\begin{align*}
g(x)&=\sum_{i=0}^{n}\gamma_ix^i(1+x)^{2(n-i)}\\
&=\sum_{i=0}^{n}\gamma_ix^i(1+2x+x^2)^{n-i}\\
&=\sum_{i=0}^{n}\sum_{\ell=0}^{n-i}\binom{n-i}{\ell}2^{\ell}\gamma_i x^{i+\ell}(1+x^2)^{n-i-\ell}.
\end{align*}
Hence $g(x)$ can be expanded as $$g(x)=\sum_{k=0}^{n}\lambda_kx^k(1+x^2)^{n-k}.$$ It is clear that if $\gamma_i\geq 0$ for all $0\leq i\leq n$, then
$\lambda_k\geq 0$ for all $0\leq k\leq n$. Furthermore,
we have
\begin{align*}
g(x)&=\sum_{k=0}^{\lrf{n/2}}\lambda_{2k}x^{2k}(1+x^2)^{n-2k}+\sum_{k=0}^{\lrf{(n-1)/2}}\lambda_{2k+1}x^{2k+1}(1+x^2)^{n-2k-1}\\
&=g_1(x^2)+xg_2(x^2).
\end{align*}

Similarly, if $h(x)=\sum_{i=0}^{2n+1}h_ix^i$ a symmetric polynomial, then we have
\begin{align*}
h(x)&=\sum_{i=0}^{n}\beta_ix^i(1+x)^{2n+1-2i}\\
&=(1+x)\sum_{i=0}^{n}\sum_{\ell=0}^{n-i}\binom{n-i}{\ell}2^{\ell}\beta_i x^{i+\ell}(1+x^2)^{n-i-\ell}.
\end{align*}
Hence $h(x)$ can be expanded as $$h(x)=(1+x)\sum_{k=0}^{n}\mu_kx^k(1+x^2)^{n-k}.$$

\begin{definition}
If $f(x)=(1+x)^\nu\sum_{k=0}^{n}\lambda_kx^k(1+x^2)^{n-k}$ and $\lambda_k\geq 0$ for all $0\leq k\leq n$, then we say that $f(x)$ is semi-$\gamma$-positive, where $\nu=0$ or $\nu=1$.
\end{definition}
It should be noted that a semi-$\gamma$-positive polynomial is not always $\gamma$-positive.
From the above discussion it follows that we have the following result.
\begin{proposition}
If $f(x)=(1+x)^\nu \left(f_1(x^2)+xf_2(x^2)\right)$ is a semi-$\gamma$-positive polynomial, then both $f_1(x)$ and $f_2(x)$ are $\gamma$-positive.
\end{proposition}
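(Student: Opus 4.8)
The plan is to unwind the two definitions and observe that the even--odd decomposition of the bracketed factor directly exhibits the $\gamma$-expansions of $f_1$ and $f_2$. First I would discard the factor $(1+x)^\nu$, since it plays no role: by the definition of semi-$\gamma$-positivity we have $f(x)=(1+x)^\nu\sum_{k=0}^{n}\lambda_k x^k(1+x^2)^{n-k}$ with every $\lambda_k\geq 0$, and comparing this with the hypothesis $f(x)=(1+x)^\nu\left(f_1(x^2)+xf_2(x^2)\right)$ reduces the whole statement to the single identity
\begin{equation*}
f_1(x^2)+xf_2(x^2)=\sum_{k=0}^{n}\lambda_k x^k(1+x^2)^{n-k}.
\end{equation*}

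The key observation is that each factor $(1+x^2)^{n-k}$ is a polynomial in $x^2$, hence contributes only even powers of $x$; therefore the parity of the exponent of $x$ in the term $\lambda_k x^k(1+x^2)^{n-k}$ is exactly the parity of $k$. Collecting the terms with $k$ even gives the even part $f_1(x^2)$, and collecting the terms with $k$ odd gives the odd part $xf_2(x^2)$, precisely as in the computation preceding the definition:
\begin{align*}
f_1(x^2)&=\sum_{k=0}^{\lrf{n/2}}\lambda_{2k}x^{2k}(1+x^2)^{n-2k},\\
xf_2(x^2)&=\sum_{k=0}^{\lrf{(n-1)/2}}\lambda_{2k+1}x^{2k+1}(1+x^2)^{n-2k-1}.
\end{align*}

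Next I would substitute $y=x^2$ to read off the $\gamma$-expansions. From the first line, $f_1(y)=\sum_{k}\lambda_{2k}y^{k}(1+y)^{n-2k}$, so the $\gamma$-coefficients of $f_1$ are exactly the $\lambda_{2k}\geq 0$; from the second line, after cancelling the common factor $x$, I obtain $f_2(y)=\sum_{k}\lambda_{2k+1}y^{k}(1+y)^{n-1-2k}$, so the $\gamma$-coefficients of $f_2$ are exactly the $\lambda_{2k+1}\geq 0$. In both cases the nonnegativity is inherited verbatim from the hypothesis $\lambda_k\geq 0$, which establishes the $\gamma$-positivity of $f_1$ and of $f_2$.

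Since this decomposition is the very one already carried out in the discussion preceding the definition of semi-$\gamma$-positivity, no genuine difficulty arises. The only point deserving care—and the closest thing to an obstacle—is the bookkeeping around the substitution $y=x^2$: one must check that $f_1$ and $f_2$ are honest polynomials in $y$, i.e.\ that the exponents $n-2k$ and $n-1-2k$ stay nonnegative over the stated summation ranges, but this is immediate from the bounds $k\le\lrf{n/2}$ and $k\le\lrf{(n-1)/2}$ respectively.
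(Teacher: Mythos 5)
Your proof is correct and follows essentially the same route as the paper: the paper's own justification is precisely the even/odd-parity decomposition of $\sum_k \lambda_k x^k(1+x^2)^{n-k}$ carried out in the discussion preceding the definition, from which the proposition is stated to follow. Your only additions—explicitly cancelling $(1+x)^\nu$ and checking the exponent bookkeeping under $y=x^2$—are minor clarifications of that same argument.
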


In the following, we shall show the semi-$\gamma$-positivity of the alternating run polynomials of dual Stirling permutations.
Following~\cite{Gessel78}, a {\it Stirling permutation} of order $n$ is a permutation of the multiset $\{1,1,\ldots,n,n\}$ such that
for each $i$, $1\leq i\leq n$, all entries between the two occurrences of $i$ are larger than $i$.
There has been much recent work on Stirling permutations, see~\cite{Haglund12,Ma19} and references therein.

Denote by $\mqn$ the set of {\it Stirling permutations} of order $n$.
Let $\sigma=\sigma_1\sigma_2\cdots\sigma_{2n}\in\mqn$.
Let $\Phi$ be the injection which maps each first occurrence of entry $j$ in $\sigma$ to $2j$ and the
second $j$ to $2j-1$,
where $j\in [n]$. For example, $\Phi(221331)=432651$.
Let $\Phi(\mqn)=\{\pi\mid \sigma\in\mqn, \Phi(\sigma)=\pi\}$
be the set of {\it dual Stirling permutations} of order $n$.
Clearly, $\Phi(\mqn)$ is a subset of $\ms_{2n}$.
For $\pi\in\Phi(\mqn)$, the entry $2j$ is to the left of $2j-1$, and all entries in $\pi$ between $2j$ and $2j-1$ are larger than $2j$, where $1\leq j\leq n$.
Noted that $\pi\in \Phi(\mqn)$ always ends with a descending run.
The alternating runs polynomials of dual Stirling permutations are defined by
$$F_n(x)=\sum_{\sigma\in\Phi(\mqn)}x^{\altrun(\sigma)}=\sum_{k=1}^{2n-1}F_{n,k}x^k.$$

According to~\cite{Mawang16}, the numbers $F_{n,k}$ satisfy the recurrence relation
\begin{equation}\label{Fnk-recurrence}
F_{n+1,k}=kF_{n,k}+F_{n,k-1}+(2n-k+2)F_{n,k-2}.
\end{equation}
with the initial conditions $F_{0,0}=1,~F_{1,1}=1$ and $F_{n,0}=0$ for $n\geq1$.
It follows from~\eqref{Fnk-recurrence} that
\begin{equation*}
F_{n+1}(x)=(x+2nx^2)F_n(x)+x(1-x^2)F_n'(x).
\end{equation*}
The first few $F_n(x)$ are given as follows:
\begin{align*}
  F_1(x)&=x, \\
  F_2(x)&=x+x^2+x^3, \\
  F_3(x)&=x+3x^2+7x^3+3x^4+x^5,\\
  F_4(x)&=x+7x^2+29x^3+31x^4+29x^5+7x^6+x^7.
\end{align*}

Let $$r(x)=\sqrt{\frac{1+x}{1-x}}.$$
By induction, it is to verify that
\begin{align*}
&\left(x\frac{d}{dx}\right)^{2n}r(x)=\frac{r(x)F_{2n}(x)}{(1-x^2)^{2n}},\\
&\left(x\frac{d}{dx}\right)^{2n+1}r(x)= \frac{F_{2n+1}(x)}{r(x)(1-x^2)^{2n}(1-x)^2}.
\end{align*}
\begin{lemma}[\cite{Mawang16}]\label{Prop2}
If
\begin{equation}\label{Grammar-fap}
G_2=\{x\rightarrow xyz, y\rightarrow yz^2, z\rightarrow y^2z\},
\end{equation}
then we have
\begin{equation}\label{DnxFnk}
D_{G_2}^n(x)=x\sum_{\sigma\in\Phi(\mqn)}y^{\altrun(\sigma)}z^{2n-\altrun(\sigma)}=x\sum_{k=0}^{2n-1}F_{n,k}y^kz^{2n-k}.
\end{equation}
\end{lemma}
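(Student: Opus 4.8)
The plan is to argue by induction on $n$, reading the action of $D_{G_2}$ as the operation of inserting the two new largest letters into a dual Stirling permutation. First I would fix the insertion model. Every $\pi\in\Phi(\mathcal{Q}_{n+1})$ is recovered uniquely from $\Phi(\mqn)$ by deleting the block $(2n+2)(2n+1)$: these two letters are forced to be adjacent with $2n+2$ immediately to the left of $2n+1$, since they are the $\Phi$-images of the two copies of the largest value $n+1$ in the underlying Stirling permutation, which must be adjacent as nothing may lie between them. Thus building $\Phi(\mathcal{Q}_{n+1})$ from $\Phi(\mqn)$ amounts to inserting the block $BA:=(2n+2)(2n+1)$ into one of the $2n+1$ gaps of each $\pi\in\Phi(\mqn)$, and (by the standard recursive structure of Stirling permutations) this is a bijection onto $\Phi(\mathcal{Q}_{n+1})$ that commutes with $\Phi$.

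The heart of the argument is to classify the $2n+1$ gaps of $\pi$ according to the change $\Delta:=\altrun(\pi')-\altrun(\pi)$ caused by inserting $BA$. Because $B$ is a new maximum and $A$ a new second maximum, the inserted piece rises to $B$ and then descends through $A$ to the right neighbour, and a short local case analysis shows $\Delta\in\{0,1,2\}$. I would match these to the three rules via the exponent of $y$, which records $\altrun$: $\Delta=1$ with $x\to xyz$, $\Delta=0$ with $y\to yz^2$, and $\Delta=2$ with $z\to y^2z$. I then prove the three counting facts that make this work: exactly one gap gives $\Delta=1$ (the gap at the very front if $\pi$ begins with an ascent, otherwise the gap just after $\pi(1)$); the number of gaps with $\Delta=0$ equals $\altrun(\pi)$; and the remaining $2n-\altrun(\pi)$ gaps give $\Delta=2$ (in particular the terminal gap, using that $\pi$ ends with a descending run).

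With the classification in hand, I assign to each gap the letter $x$, $y$ or $z$ dictated by its $\Delta$-value, so that the weight of $\pi$ is the intrinsic monomial $xy^{\altrun(\pi)}z^{2n-\altrun(\pi)}$. Expanding by the product rule and substituting the rules of $G_2$ gives
$$D_{G_2}\bigl(xy^kz^{2n-k}\bigr)=xy^{k+1}z^{2n-k+1}+k\,xy^kz^{2n-k+2}+(2n-k)\,xy^{k+2}z^{2n-k},$$
whose three terms are precisely the weights of the children produced by the single $\Delta=1$ gap, the $k$ gaps with $\Delta=0$, and the $2n-k$ gaps with $\Delta=2$. Summing over all $\pi\in\Phi(\mqn)$ and invoking the insertion bijection yields $D_{G_2}^{n+1}(x)=x\sum_{k}F_{n+1,k}y^kz^{2(n+1)-k}$, closing the induction from the base case $D_{G_2}^0(x)=x$ (the empty permutation, with $\altrun=0$). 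As a byproduct the same bookkeeping reproduces the recurrence \eqref{Fnk-recurrence}.

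I expect the main obstacle to be the middle counting fact, that the number of $\Delta=0$ gaps equals $\altrun(\pi)$. The plan there is to observe that such gaps occur exactly on the two sides of each interior peak, together with the front gap when $\pi$ starts with a descent, so their number is $2(\#\text{interior peaks})+[\pi(1)>\pi(2)]$. I would then use that interior peaks and valleys alternate and that $\pi$ terminates in a descent to deduce $\#\text{peaks}-\#\text{valleys}=[\pi(1)<\pi(2)]$, which reduces the count to $1+\#\text{peaks}+\#\text{valleys}=\altrun(\pi)$. Handling the boundary gaps and the degenerate monotone case correctly is exactly where the hypothesis that dual Stirling permutations end with a descending run becomes essential.
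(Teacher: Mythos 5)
Your proof is correct, and it actually fills a gap the paper leaves open: the paper gives no proof of this lemma but simply quotes it from \cite{Mawang16}. I checked your key claims. Since the two copies of $n+1$ in a Stirling permutation of order $n+1$ must be adjacent, every element of $\Phi(\mathcal{Q}_{n+1})$ does arise uniquely by inserting the block $(2n+2)(2n+1)$ into one of the $2n+1$ gaps of some $\pi\in\Phi(\mqn)$. Writing the descent word of $\pi$ and noting that insertion replaces the relation at the chosen gap by the local pattern (ascent, descent, descent), one verifies exactly your classification: the unique $\Delta=1$ gap is the front gap if $\pi(1)<\pi(2)$ and the gap just after $\pi(1)$ otherwise; the $\Delta=0$ gaps are the two gaps flanking each interior peak, plus the front gap when $\pi(1)>\pi(2)$, and peak/valley alternation together with the terminal descending run gives their number as $\altrun(\pi)$; the remaining $2n-\altrun(\pi)$ gaps, including the terminal one (this is where the descending-run hypothesis is essential, as you correctly flag), have $\Delta=2$. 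Labelling the gaps by $x$, $y$, $z$ then makes the Leibniz expansion of $D_{G_2}$ reproduce the insertion child-by-child, so the induction closes, and your bookkeeping indeed recovers \eqref{Fnk-recurrence} as a byproduct. Methodologically, this is precisely the grammatical-labeling technique of Chen and Fu that the present paper uses in its own proof of \eqref{Rnxq-des} (labels $a,b,c$ and insertion of $n$ into the cycle structure), so your argument is the natural analogue for dual Stirling permutations and is presumably close in spirit to the proof in \cite{Mawang16}, which the authors chose not to reproduce.
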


We now recall another combinatorial interpretation of $F_{n}(x)$.
An occurrence of an {\it ascent-plateau} of $\sigma\in\mqn$ is an index $i$ such that $\sigma_{i-1}<\sigma_{i}=\sigma_{i+1}$, where $i\in\{2,3,\ldots,2n-1\}$. An occurrence of a {\it left ascent-plateau} is an index $i$ such that $\sigma_{i-1}<\sigma_{i}=\sigma_{i+1}$, where $i\in\{1,2,\ldots,2n-1\}$ and $\sigma_0=0$.
Let $\ap(\sigma)$ and $\lap(\sigma)$ be the numbers of ascent-plateaus and left ascent-plateaus of $\sigma$, respectively.
The number of flag ascent-plateaus of $\sigma$ is defined by $$\fap(\sigma)=\left\{
               \begin{array}{ll}
                 2\ap(\sigma)+1, & \hbox{if $\sigma_1=\sigma_2$;} \\
                 2\ap(\sigma), & \hbox{otherwise.}
               \end{array}
             \right.
$$
Clearly, $\fap(\sigma)=\ap(\sigma)+\lap(\sigma)$.
Following~\cite[Section~3]{Ma19}, we have
\begin{equation*}\label{Dnx}
D_{G_2}^n(x)=x\sum_{\sigma\in\mqn}y^{\fap(\sigma)}z^{2n-\fap(\sigma)}.
\end{equation*}
Thus,
$$F_n(x)=\sum_{\sigma\in\mqn}x^{\fap(\sigma)}.$$
In fact, it is easy to verify that $\fap(\sigma)=\altrun( \Phi(\sigma))$ for any $\sigma\in\mqn$.

\begin{proposition}\label{propFnx}
For $n\geq 1$, we have
\begin{equation*}
F_n(x)=\sum_{k=1}^n\gamma_{n,k}x^k(1+x)^{2n-2k},
\end{equation*}
where the numbers $\gamma_{n,k}$ satisfy the recurrence relation
\begin{equation}\label{gnk-recur}
\gamma_{n+1,k}=k\gamma_{n,k}+(2n-4k+5)\gamma_{n,k-1},
\end{equation}
with the initial conditions $\gamma_{1,1}=1$ and $\gamma_{1,k}=0$ for $k\neq 1$. In particular, $$\gamma_{n+1,n+1}=(-1)^n(2n-1)!!~{\text{for $n\geq 1$}}.$$
\end{proposition}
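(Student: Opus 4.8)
The plan is to prove the $\gamma$-expansion by induction on $n$ through the grammar $G_2$ of Lemma~\ref{Prop2}, and then to read off $\gamma_{n+1,n+1}$ from the boundary of the recurrence~\eqref{gnk-recur}. Working with the grammar rather than directly with $F_n(x)$ is convenient because the substitution $u=yz$, $v=y+z$ converts the $\gamma$-expansion into a plain monomial expansion. Concretely, I claim that
\begin{equation*}
D_{G_2}^n(x)=x\sum_{k}\gamma_{n,k}u^kv^{2n-2k},\qquad u=yz,\ v=y+z,
\end{equation*}
where the $\gamma_{n,k}$ are defined by~\eqref{gnk-recur}. Granting this, setting $y=x$ and $z=1$ (so that $u=x$ and $v=1+x$) and comparing with~\eqref{DnxFnk} immediately yields $F_n(x)=\sum_k\gamma_{n,k}x^k(1+x)^{2n-2k}$; in particular the existence of the expansion drops out automatically, with no separate appeal to the symmetry of $F_n(x)$.

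To prove the claim I first record the action of $D_{G_2}$ on the new letters: $D_{G_2}(x)=xu$, $D_{G_2}(v)=yz^2+y^2z=uv$, and, since $y^2+z^2=v^2-2u$,
\begin{equation*}
D_{G_2}(u)=yz^3+y^3z=u(y^2+z^2)=u(v^2-2u).
\end{equation*}
The decisive feature is that $D_{G_2}(u)$ and $D_{G_2}(v)$ are again polynomials in $u$ and $v$, so $D_{G_2}$ sends $x\,u^kv^{2n-2k}$ to a combination of terms $x\,u^{k'}v^{2(n+1)-2k'}$ and no monomial of a different shape survives. Applying the product rule to $x\sum_k\gamma_{n,k}u^kv^{2n-2k}$ and reading off the coefficient of $u^kv^{2(n+1)-2k}$ produces $k\gamma_{n,k}$ (from the $v^2$-part of $D_{G_2}(u)$ acting on the $k$th summand) together with $1-2(k-1)+(2n-2k+2)=2n-4k+5$ times $\gamma_{n,k-1}$ (from $D_{G_2}(x)$, from the $-2u$-part of $D_{G_2}(u)$, and from $D_{G_2}(v)$, all acting on the $(k-1)$st summand). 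This is exactly~\eqref{gnk-recur}, and the base case $n=1$ is $D_{G_2}(x)=xu=x\gamma_{1,1}u$.

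A grammar-free alternative would be to substitute the ansatz directly into the recurrence $F_{n+1}(x)=(x+2nx^2)F_n(x)+x(1-x^2)F_n'(x)$. I expect this to be the more delicate route, and it contains the main obstacle: differentiating generates odd powers $(1+x)^{2n-2k+1}$, so one must write $1-x=(1+x)-2x$ and then check that the spurious monomials $x^{k+2}(1+x)^{2n-2k}$ coming from $(x+2nx^2)F_n(x)$ and from $x(1-x^2)F_n'(x)$ cancel exactly, leaving a genuine $\gamma$-expansion for $F_{n+1}(x)$. The grammar formulation is preferable precisely because this cancellation is automatic: it is encoded in the closure of $\{u,v\}$ under $D_{G_2}$.

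Finally, for the closed form of $\gamma_{n+1,n+1}$ I would specialize~\eqref{gnk-recur} to $k=n+1$. Since the $\gamma$-expansion of $F_n(x)$ runs only up to $k=n$, we have $\gamma_{n,n+1}=0$, so the recurrence collapses to
\begin{equation*}
\gamma_{n+1,n+1}=(2n-4(n+1)+5)\gamma_{n,n}=-(2n-1)\gamma_{n,n}.
\end{equation*}
Iterating this from $\gamma_{1,1}=1$ gives $\gamma_{n+1,n+1}=(-1)^n(2n-1)(2n-3)\cdots 3\cdot 1=(-1)^n(2n-1)!!$, which is the asserted formula and completes the proof.
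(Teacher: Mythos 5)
Your proposal is correct and follows essentially the same route as the paper: the paper performs the identical change of variables $a=yz$, $b=y+z$ (your $u,v$), observes the same closure $D(x)=xa$, $D(a)=a(b^2-2a)$, $D(b)=ab$, derives \eqref{gnk-recur} by the same coefficient extraction, and obtains $\gamma_{n+1,n+1}=-(2n-1)\gamma_{n,n}$ from the same boundary specialization. The only cosmetic difference is that the paper packages the closure property as a new grammar $G_3$ before substituting back into \eqref{DnxFnk}, whereas you work with $D_{G_2}$ directly on polynomials in $u,v$.
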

\begin{proof}
We first consider a change of the grammar~\eqref{Grammar-fap}. Set $a=yz$ and $b=y+z$. Then we have
$D(x)=xa,~D(a)=a(b^2-2a),~D(b)=ab$.
If $$G_3=\{x\rightarrow xa,~a\rightarrow a(b^2-2a),~b\rightarrow ab\},$$ then
by induction, we see that there exist integers $\gamma_{n,k}$ such that
\begin{equation}\label{G3def}
D_{G_3}^n(x)=x\sum_{k=0}^n\gamma_{n,k}a^kb^{2n-2k}.
\end{equation}
Note that
\begin{align*}
D_{G_3}^{n+1}(x)
&=D_{G_3}\left(x\sum_{k=1}^n\gamma_{n,k}a^kb^{2n-2k}\right)\\
&=x\sum_k \gamma_{n,k}a^kb^{2n-2k}\left(a+kb^{2}-2ka+(2n-2k)a\right)
\end{align*}
By comparing the coefficients of $a^kb^{2n-2k+2}$, we immediately get~\eqref{gnk-recur}. Moreover, it is clear that $\gamma_{n,0}=0$ for $n\geq 1$.
By using~\eqref{G3def}, upon taking $a=yz$ and $b=y+z$, we get
\begin{equation}\label{DnxFnk01}
D_{G_2}^n(x)=x\sum_{k=0}^n\gamma_{n,k}(yz)^k(y+z)^{2n-2k}.
\end{equation}
Then comparing~\eqref{DnxFnk01} with~\eqref{DnxFnk}, we see that $F_n(x)=\sum_{k=1}^n\gamma_{n,k}x^k(1+x)^{2n-2k}$ for $n\geq 1$.
By using~\eqref{gnk-recur}, we obtain
$$\gamma_{n+1,n+1}=-(2n-1)\gamma_{n,n},$$
which yields the desired explicit formula.
\end{proof}

For $n\geq 1$, let $\gamma_n(x)=\sum_{k=1}^n\gamma_{n,k}x^k$. It follows from~\eqref{gnk-recur} that
$$\gamma_{n+1}(x)=(2n+1)x\gamma_n(x)+x(1-4x)\gamma_n'(x).$$
The first few $\gamma_n(x)$ are $\gamma_0(x)=1,~\gamma_1(x)=x,~\gamma_2(x)=x-x^2,~\gamma_3(x)=x-x^2+3x^3$.
From Proposition~\ref{propFnx}, we see that for any positive even integer $n$, the polynomial $F_n(x)$ is not $\gamma$-positive.

We can now present the third main result of this paper.
\begin{theorem}\label{thmFnx}
The polynomial $F_n(x)$ is semi-$\gamma$-positive. More precisely,
we have $$F_n(x)=\sum_{k=0}^nf_{n,k}x^k(1+x^2)^{n-k},$$
where the numbers $f_{n,k}$ satisfy the recurrence relation
\begin{equation}\label{fnk-recu}
f_{n+1,k}=kf_{n,k}+f_{n,k-1}+4(n-k+2)f_{n,k-2},
\end{equation}
with the initial conditions $f_{0,0}=1$ and $f_{n,0}=0$ for $n\geq 1$.
 Let $f_n(x)=\sum_{k=0}^nf_{n,k}x^k$.
Then
\begin{equation}\label{fxz-EGF}
f(x,z)=\sum_{n=0}^\infty f_n(x)\frac{z^n}{n!}=\sqrt{T(2x,z)},
\end{equation}
where $T(x,z)$ is given by~\eqref{EGF-Tnx}.
\end{theorem}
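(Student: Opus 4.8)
The plan is to obtain the semi-$\gamma$-expansion of $F_n(x)$ by a change of the grammar $G_2$ in~\eqref{Grammar-fap}, in exact analogy with the change $a=yz$, $b=y+z$ used in the proof of Proposition~\ref{propFnx}, but now choosing variables that produce the factor $1+x^2$ rather than $1+x$. Since setting $y=x$, $z=1$ in~\eqref{DnxFnk} recovers $xF_n(x)$, and since $(y^2+z^2)|_{y=x,z=1}=1+x^2$ while $(yz)|_{y=x,z=1}=x$, the natural substitution is $u=yz$ and $v=y^2+z^2$. First I would compute the induced grammar directly from $G_2$: one finds $D(x)=xyz=xu$, $D(u)=D(yz)=yz^3+y^3z=uv$, and $D(v)=2yD(y)+2zD(z)=4y^2z^2=4u^2$, so the new grammar is
\begin{equation*}
G=\{x\rightarrow xu,\ u\rightarrow uv,\ v\rightarrow 4u^2\}.
\end{equation*}

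Because $D$ maps $x\,\Q[u,v]$ into itself and raises the $u,v$-degree by one at each step, an easy induction shows $D^n(x)=x\sum_k f_{n,k}u^kv^{n-k}$ for some integers $f_{n,k}$. Substituting $u=yz$, $v=y^2+z^2$ back (that is, setting $y=x$, $z=1$) and comparing with~\eqref{DnxFnk} then gives $F_n(x)=\sum_k f_{n,k}x^k(1+x^2)^{n-k}$, which is precisely the claimed semi-$\gamma$ form. Computing $D^{n+1}(x)=D\bigl(x\sum_k f_{n,k}u^kv^{n-k}\bigr)$ with the rules of $G$ and collecting the coefficient of $u^jv^{n+1-j}$ produces exactly the recurrence~\eqref{fnk-recu}; the initial data $f_{0,0}=1$ and $f_{n,0}=0$ for $n\ge1$ come from $D^0(x)=x$ and from the rule $x\rightarrow xu$ never creating a $u$-free term. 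Semi-$\gamma$-positivity is then immediate: the recurrence has nonnegative coefficients (note that $4(n-k+2)\ge 0$ whenever $f_{n,k-2}\ne 0$, i.e.\ $k\le n+2$) and nonnegative initial values, so $f_{n,k}\ge 0$ by induction.

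For the generating function I would recognize $G$ as a rescaling of the grammar $G_1$ in~\eqref{qxy-grammar} at parameter $q=\tfrac12$. Putting $a=x$, $b=2u$, $c=v$ turns $G$ into $\{a\rightarrow\tfrac12ab,\ b\rightarrow bc,\ c\rightarrow b^2\}$, which is $G_1$ with $q=\tfrac12$. Hence~\eqref{DnaRnk} gives $D^n(x)=x\sum_k R_{n,k}(\tfrac12)(2u)^kv^{n-k}=x\sum_k 2^kR_{n,k}(\tfrac12)u^kv^{n-k}$, so $f_{n,k}=2^kR_{n,k}(\tfrac12)$ and therefore $f_n(x)=\sum_k 2^kR_{n,k}(\tfrac12)x^k=R_n(2x;\tfrac12)$. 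Summing over $n$ and invoking Proposition~\ref{prop-Rxzq}, which gives $R(X,z;q)=T^q(X,z)$, with $X=2x$ and $q=\tfrac12$ yields
\begin{equation*}
f(x,z)=\sum_{n\ge0}R_n(2x;\tfrac12)\frac{z^n}{n!}=R(2x,z;\tfrac12)=T^{1/2}(2x,z)=\sqrt{T(2x,z)},
\end{equation*}
which is~\eqref{fxz-EGF}. Equivalently, one may apply~\eqref{Dna-EGF} to $G$ directly and then set $u=x$, $v=1$.

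The main obstacle is entirely in the discovery step: guessing the correct change of variables $u=yz$, $v=y^2+z^2$, and — just as importantly — noticing that the resulting grammar is $G_1$ only after the rescaling $b\mapsto 2u$ rather than $b\mapsto u$. This factor of $2$ (equivalently the $4$ in $v\rightarrow 4u^2$) is exactly what produces the coefficient $4(n-k+2)$ in~\eqref{fnk-recu} and the argument $2x$ together with the square root in $\sqrt{T(2x,z)}$; keeping this bookkeeping straight is the only delicate point, after which the positivity and generating-function statements follow mechanically.
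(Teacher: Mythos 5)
Your proposal is correct and takes essentially the same route as the paper's proof: the change of variables $u=yz$, $v=y^2+z^2$ turning $G_2$ into the grammar $\{x\rightarrow xu,\ u\rightarrow uv,\ v\rightarrow 4u^2\}$, the recurrence \eqref{fnk-recu} and nonnegativity by comparing coefficients of $u^kv^{n+1-k}$, and the identification of this grammar with $G_1$ at $q=\tfrac12$ via $a=x$, $b=2u$, $c=v$, yielding $f(x,z)=R(2x,z;\tfrac12)=\sqrt{T(2x,z)}$. The only cosmetic difference is that you extract $f_{n,k}=2^kR_{n,k}(\tfrac12)$ termwise from \eqref{DnaRnk}, whereas the paper invokes the generating-function identity \eqref{Dna-EGF} directly.
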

\begin{proof}
We first consider the grammar~\eqref{Grammar-fap}.
Note that $$D(x)=xyz,~D(yz)=yz(y^2+z^2),~D(y^2+z^2)=4y^2z^2.$$
Set $u=yz$ and $v=y^2+z^2$. Then we have $D(x)=xu,~D(u)=uv$ and $D(v)=4u^2$.
If
\begin{equation}\label{Grammar-gamma-coef}
G_4=\{x\rightarrow xu,~u\rightarrow uv,~v\rightarrow 4u^2\},
\end{equation}
then by induction we see that there exist nonnegative integers $f_{n,k}$ such that
\begin{equation}\label{Dnxfnk-def}
D_{G_4}^n(x)=x\sum_{k=0}^nf_{n,k}u^kv^{n-k}.
\end{equation}
Note that
\begin{align*}
 D_{G_4}^{n+1}(x)
  &=D_{G_4}\left(x\sum_{k=1}^nf_{n,k}u^kv^{n-k}\right)\\
  &=x\sum_{k}f_{n,k}\left(u^{k+1}v^{n-k}+ku^kv^{n-k+1}+4(n-k)u^{k+2}v^{n-k-1}\right).
\end{align*}
By comparing the coefficients of $u^kv^{n+1-k}$, we get~\eqref{fnk-recu}. Moreover, it follows from~\eqref{Grammar-gamma-coef} that
$f_{0,0}=1$ and $f_{n,0}=0$ for $n\geq 1$.
By using~\eqref{Dnxfnk-def}, upon taking $u=yz$ and $v=y^2+z^2$, we get
\begin{equation}\label{Dnxfnk}
D_{G_2}^n(x)=x\sum_{k=0}^nf_{n,k}(yz)^k(y^2+z^2)^{n-k}.
\end{equation}
By comparing~\eqref{Dnxfnk} with~\eqref{DnxFnk}, we get
\begin{equation}\label{Fnxfnk}
F_n(x)=\sum_{k=0}^nf_{n,k}x^k(1+x^2)^{n-k}.
\end{equation}

We now consider a change of the grammar~\eqref{qxy-grammar}. Set $q=\frac{1}{2},~a=x,~b=2u,~c=v$. Then
$$D(x)=xu,~D(u)=uv,~D(v)=4u^2,$$
which are the substitution rules in the grammar~\eqref{Grammar-gamma-coef}. Hence it follows from~\eqref{Dna-EGF} that
\begin{equation*}\label{Dnxfnk2}
\sum_{n=0}^\infty D_{G_4}^n(x)\frac{z^n}{n!}=x\sum_{n=0}^\infty\sum_{k=0}^nf_{n,k}u^kv^{n-k}\frac{z^n}{n!}=xR\left(\frac{2u}{v},vz;\frac{1}{2}\right),
\end{equation*}
which leads to $f(x,z)=R(2x,z;1/2)=\sqrt{T(2x,z)}$.
This completes the proof.
\end{proof}

Combining~\eqref{fxz-EGF} and~\eqref{Fnxfnk},  we immediately get the following result.
\begin{corollary}\label{corfxz}
We have $$F(x,z)=\sum_{n=0}^\infty F_n(x)\frac{z^n}{n!}=\sqrt{T\left(\frac{2x}{1+x^2},(1+x^2)z\right)}.$$
\end{corollary}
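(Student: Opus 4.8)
The plan is to obtain the exponential generating function $F(x,z)$ directly from the two expansions already established in Theorem~\ref{thmFnx}, namely the generating-function identity~\eqref{fxz-EGF} together with the algebraic identity~\eqref{Fnxfnk} that relates the coefficients $F_{n,k}$ to the semi-$\gamma$ coefficients $f_{n,k}$. The key observation is that the two families of polynomials $\{F_n(x)\}$ and $\{f_n(x)\}$ are linked by a single substitution of variables, so that passing from $f(x,z)$ to $F(x,z)$ amounts to an elementary change of arguments inside the generating function.

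First I would rewrite~\eqref{Fnxfnk} in a homogeneous form by factoring out $(1+x^2)^n$: since
\begin{equation*}
F_n(x)=\sum_{k=0}^n f_{n,k}\,x^k(1+x^2)^{n-k}=(1+x^2)^n\sum_{k=0}^n f_{n,k}\left(\frac{x}{1+x^2}\right)^k,
\end{equation*}
we recognize the inner sum as $f_n\!\left(\frac{x}{1+x^2}\right)$, and therefore
$$F_n(x)=(1+x^2)^n\, f_n\!\left(\frac{x}{1+x^2}\right).$$
This is the crux of the argument: the semi-$\gamma$ expansion says precisely that $F_n$ is obtained from $f_n$ by the substitution $x\mapsto \frac{x}{1+x^2}$ followed by multiplication by $(1+x^2)^n$.

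Next I would feed this relation into the exponential generating function. Multiplying $f_n$ by $(1+x^2)^n$ and dividing the monomial $z^n/n!$ correspondingly, we get
\begin{equation*}
F(x,z)=\sum_{n=0}^\infty (1+x^2)^n f_n\!\left(\frac{x}{1+x^2}\right)\frac{z^n}{n!}
      =\sum_{n=0}^\infty f_n\!\left(\frac{x}{1+x^2}\right)\frac{\bigl((1+x^2)z\bigr)^n}{n!}
      =f\!\left(\frac{x}{1+x^2},(1+x^2)z\right),
\end{equation*}
where the last equality is just the definition of the bivariate series $f(x,z)$ with its first argument specialized to $\frac{x}{1+x^2}$ and its second to $(1+x^2)z$. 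Applying~\eqref{fxz-EGF}, namely $f(x,z)=\sqrt{T(2x,z)}$, with these substituted arguments gives
$$F(x,z)=\sqrt{T\!\left(\frac{2x}{1+x^2},(1+x^2)z\right)},$$
which is exactly the claimed formula, the factor $2$ combining with $\frac{x}{1+x^2}$ to produce $\frac{2x}{1+x^2}$.

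The argument is essentially a bookkeeping computation, so I do not expect a genuine obstacle; the one point that must be handled with care is the interchange of the substitution $x\mapsto\frac{x}{1+x^2}$ with the summation defining the generating function. This is unproblematic because the identity holds term-by-term at the level of the polynomials $F_n$ and $f_n$ for each fixed $n$, so no analytic convergence issue arises—the manipulation is purely formal in the variable $z$, and the two sides agree as formal power series coefficient by coefficient.
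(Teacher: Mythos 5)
Your proposal is correct and follows exactly the paper's route: the paper proves this corollary by ``combining~\eqref{fxz-EGF} and~\eqref{Fnxfnk}'', and your computation---rewriting~\eqref{Fnxfnk} as $F_n(x)=(1+x^2)^n f_n\bigl(\tfrac{x}{1+x^2}\bigr)$, summing to get $F(x,z)=f\bigl(\tfrac{x}{1+x^2},(1+x^2)z\bigr)$, and then invoking $f(x,z)=\sqrt{T(2x,z)}$---is precisely the detail that the paper leaves implicit in the word ``immediately.''
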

It would be interesting to present a combinatorial interpretation of Corollary~\ref{corfxz}.
By using~\eqref{fxz-EGF}, it is not hard to verify that $$\sum_{n=0}^\infty f_{n,n}\frac{x^n}{n!}=\sqrt{\frac{1+\tan x}{1-\tan x}}.$$
It should be noted that the numbers $f_{n,n}$ appear as A012259 in~\cite{Sloane}.
\section{Concluding remarks}
This paper gives a survey of some results related to alternating runs of permutations.
We present a method to construct David-Barton type identities, and based on the survey~\cite{Athanasiadis17}, one can derive several 
David-Barton type identities. Moreover, we introduce the definition of semi-$\gamma$-positive polynomial.
The $\gamma$-positivity of a polynomial $f(x)$ is a sufficient (not necessary) condition for the semi-$\gamma$-positivity of $f(x)$.
In particular, we show that the alternating run polynomials of dual Stirling permutations are semi-$\gamma$-positive.

\end{document}